\documentclass[12pt]{amsart}
\usepackage[final]{epsfig}
\usepackage{graphics}
\usepackage{amsmath}
\usepackage{amsthm}
\usepackage{amsfonts}
\usepackage{latexsym}
\usepackage{amssymb}
\usepackage{graphicx}
\usepackage{url}
\usepackage{epstopdf}
\usepackage{xcolor}
\usepackage{anysize}
\marginsize{2cm}{2cm}{2cm}{2cm}

\usepackage{hyperref}
\usepackage{lineno}


\newtheorem{lemma}{Lemma}[section]
\newtheorem{proposition}[lemma]{Proposition}

\newtheorem{theorem}{Theorem}

\newtheorem{corollary}[lemma]{Corollary}

\theoremstyle{remark}
\newtheorem{remark}[lemma]{Remark}

\graphicspath{{fig/}{}}

\begin{document}
\newcommand{\eps}{{\varepsilon}}
\newcommand{\C}{{\mathbb C}}
\newcommand{\Q}{{\mathbb Q}}
\newcommand{\R}{{\mathbb R}}
\newcommand{\Z}{{\mathbb Z}}
\newcommand{\RP}{{\mathbb {RP}}}
\newcommand{\CP}{{\mathbb {CP}}}
\newcommand{\Tr}{\rm Tr}

\title{Billiards in ellipses revisited}

\author{Arseniy Akopyan}
\address{Arseniy Akopyan, Institute of Science and Technology, Klosterneuburg, 3400, N\"O, Austria}
\email{akopjan@gmail.com}

\author{Richard Schwartz}
\address{Richard Schwartz, 
Department of Mathematics,
Brown University,
Providence, RI 02912, USA}
\email{res@math.brown.edu}

\author{Serge Tabachnikov}
\address{Serge Tabachnikov,
Department of Mathematics,
Penn State University,
University Park, PA 16802, USA}
\email{tabachni@math.psu.edu}

\begin{abstract}
	We prove some recent experimental observations of D. Reznik 
concerning periodic billiard orbits in ellipses. For example, the 
sum of cosines of the angles of a periodic billiard polygon remains 
constant in the one-parameter family of such polygons (that exist due to 
the Poncelet porism). In our proofs, we use geometric and complex analytic methods.
\end{abstract}

\date{}
\maketitle

\section{Introduction} \label{sec:intro}

The billiard in an ellipse is a thoroughly studied completely integrable dynamical system, see, e.g., \cite{tabachnikov2005geometry}.
In particular, a billiard trajectory that is tangent to a confocal ellipse will remain tangent to it after each reflection.
That is, the confocal ellipses are the caustics of this billiard. 

One of the properties of this system, a consequence of its complete integrability, is that a periodic
billiard trajectory tangent to a confocal ellipse includes into a 1-parameter family of periodic
trajectories tangent to the same confocal ellipse and having the same period and the same rotation number.
This is the assertion of the Poncelet porism for confocal ellipses.
The Poncelet porism concerns the same kind of $1$-parameter family of
polygons that are simultaneously inscribed and circumscribed in the
same pair of conics, but in general the conics need not be confocal.

A classic result about a continuous $1$-parameter family of billiard paths 
is that their perimeters remain constant.
(See  \cite{tabachnikov2005geometry}, and also Lemma \ref{perimeter} below.)
Recently Dan Reznik conducted a large series of computer experiments with periodic orbits in
elliptic billiards and discovered numerous new properties of these polygons
that are similar in spirit to the constant-perimeter result.  See
\cite{reznik2019applet, reznik2019playlist, reznik2019media, reznik2019elliptic, reznik2020new, reznik2020loci}.  
In this paper we give proofs which verify some of these observations.
Essentially, we prove $3$ main results.  In the body of the
paper, we will also prove a number of variants and generalizations.

We would also like to mention that another proof of the first two results is
presented in \cite{bialy2020danreznik}; it is based on a non-standard generating
functions for convex billiard discovered by Misha Bialy.
\newline
\newline
\noindent
{\bf First Result:\/}
Let $\alpha_1,...,\alpha_n$ be the angles associated to a periodic billiard path.
Figure \ref{fig:angles} shows the case $n=5$.

\begin{figure}[hbtp]
\centering
\includegraphics{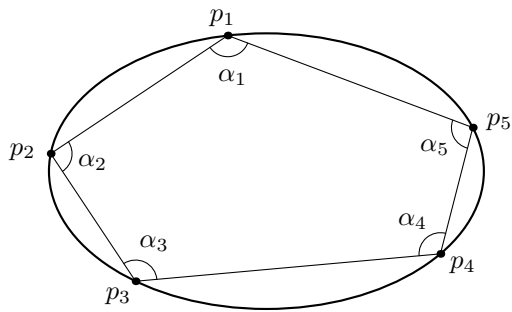} 
\caption{Angles of a billiard trajectory.}
\label{fig:angles}
\end{figure}

\begin{theorem} \label{thm:sumcos2}
The sum
\[
\sum_{i=1}^n \cos\alpha_i
\]
remains constant as $P$ varies a 1-parameter family of periodic billiard paths on an ellipse.
\end{theorem}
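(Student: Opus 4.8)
The plan is to reduce the statement to a purely metric identity about the Poncelet family, and then to prove that identity by complex analysis on the elliptic curve underlying the billiard dynamics. Write the ellipse as $\langle Ax,x\rangle=1$ with $A=\mathrm{diag}(a^{-2},b^{-2})$, let $x_1,\dots,x_n$ be the vertices of the periodic path, and let $v_i$ be the unit vector along the chord leaving $x_i$. The reflection law makes the interior angle at $x_i$ equal to $\pi-2\theta_i$, where $\theta_i$ is the angle the chord makes with the tangent line, so that $\cos\alpha_i=2\sin^2\theta_i-1$. The Joachimsthal integral $J=\langle Ax_i,v_i\rangle$ is preserved at every reflection; it is constant along each trajectory, and since $J^2$ encodes the confocal caustic it is the same for every member of the family. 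Because $Ax_i$ is normal to the ellipse one has $J=|Ax_i|\sin\theta_i$, whence $\sin^2\theta_i=J^2/|Ax_i|^2$ and
\[ \sum_{i=1}^n\cos\alpha_i=2J^2\sum_{i=1}^n\frac{1}{|Ax_i|^2}-n. \]
As $J$ and $n$ are fixed on the family, it suffices to prove that $\sum_i|Ax_i|^{-2}$ is constant. Geometrically $|Ax_i|^{-1}$ is the distance from the center to the tangent line at $x_i$, so this is the assertion that the sum of squared central distances to the tangent lines at the vertices is a Poncelet invariant.

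For the main part I would use the standard uniformization: the billiard map with a fixed confocal caustic is conjugate to the translation $z\mapsto z+\tau$ on the elliptic curve $E=\C/\Lambda$ associated to the confocal pencil, an $n$-periodic orbit corresponds to $n\tau\in\Lambda$, and moving through the one-parameter family corresponds to moving the base point $z$. The map $z\mapsto x(z)$ onto the ellipse is invariant under the hyperelliptic involution $\sigma$ of $E$, and $g(z):=|Ax(z)|^{-2}$ extends from the real locus to an elliptic function on $E$, being the pullback of the rational function $1/Q$ with $Q=x^2a^{-4}+y^2b^{-4}$ on the ellipse. The object to control is
\[ G(z)=\sum_{k=0}^{n-1} g(z+k\tau), \]
which is invariant under $z\mapsto z+\tau$ and therefore descends to an elliptic function on the quotient torus $E'=\C/(\Lambda+\Z\tau)$; equivalently, $G$ is the trace of $g$ under the isogeny $E\to E'$.

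By Liouville's theorem it then suffices to show that $G$ has no poles on $E'$. I would compute the divisor of $g$: its poles lie over the four complex points of the ellipse where $Q=0$, namely $(\pm a^2/c,\pm i\,b^2/c)$ with $c^2=a^2-b^2$, which are precisely the isotropic contact points determined by the foci and hence common to the entire confocal pencil. The crucial point is that the principal parts of $g$ cancel along each $\langle\tau\rangle$-orbit of these poles, so that their images in $E'$ carry no singularity; here one exploits how the translation $\tau$ and the involution $\sigma$ (for which $g$ is invariant, forcing residues into opposite-signed pairs) act on this distinguished four-point set, together with the torsion relation $n\tau\in\Lambda$. Once $G$ is known to be a pole-free elliptic function it is constant, and substituting back into the displayed identity proves the theorem.

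The step I expect to be the main obstacle is exactly this pole cancellation: pinning down the four poles in the uniformizing coordinate and verifying that their principal parts sum to zero over a full orbit in $E'$. The reduction in the first paragraph is elementary once the Joachimsthal integral is in hand, and the descent to $E'$ is formal; all the genuine content is in showing that the trace of $g$ is holomorphic. I would attack it by a careful divisor computation for $1/Q$ pulled back along the double cover $z\mapsto x(z)$, using the symmetry of the isotropic/focal configuration shared by the pencil to match up the singular contributions, rather than by any direct estimation of the angles.
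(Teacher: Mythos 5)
Your reduction in the first paragraph is correct (it is equivalent to the paper's identity $\sum\cos\alpha_i=JL-n$, as explained below), and the elliptic-curve framework is sound; but the proof stops exactly at its crux, and what you offer there is not enough. You must show that the trace $G(z)=\sum_{k=0}^{n-1}g(z+k\tau)$ has no poles, and the mechanism you invoke --- ``$\sigma$-invariance of $g$ forces residues into opposite-signed pairs'' --- does not by itself give this. Opposite residues at poles lying in \emph{different} $\langle\tau\rangle$-orbits cancel nothing: $G$ would then descend to $E'$ with two honest simple poles of residues $\pm r$ (perfectly consistent with the residue theorem) and would be non-constant. Note also a miscount: $g$ has \emph{eight} simple poles on $E$, not four, since each isotropic contact point $P_j$ has two preimages under the degree-two map $z\mapsto x(z)$ (these points do not lie on the caustic, hence are not branch points). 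The missing lemma, which carries all the content, is that the two poles over each $P_j$ lie in the \emph{same} orbit and in fact differ by exactly $\tau$. This is where the geometry you allude to must actually be used: the tangent to the ellipse at $P_j$ is an isotropic line through a focus, hence tangent to the caustic as well; therefore the phase point $w_j=(P_j,\ell_j)$ is a fixed point of the chord-flip involution $i_2$ (the ``second intersection'' of $\ell_j$ with the ellipse is $P_j$ again). Writing the billiard translation as the composition of $i_2$ with the tangent-swap involution $i_1$ (which is your $\sigma$, since $g$ depends only on the base point $x$), one gets that the translate of $w_j$ by $\tau$ is $i_1(w_j)=w_j'$, the other pole over $P_j$, and $g\circ i_1=g$ forces $\mathrm{res}_{w_j'}\,g=-\mathrm{res}_{w_j}\,g$. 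Hence \emph{consecutive} terms $g(z+k\tau)$ and $g(z+(k+1)\tau)$ of the (cyclic, by $n\tau\in\Lambda$) sum have cancelling simple poles, $G$ is holomorphic, and Liouville finishes. Without this lemma your argument is a plan, not a proof.

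It is also worth seeing how close your first paragraph already was to a complete elementary proof. The same discrete integration by parts used in the paper gives $L=\sum_i\langle x_{i+1}-x_i,v_i\rangle=\sum_i\langle x_i,v_{i-1}-v_i\rangle$, and since $v_{i-1}-v_i$ is normal to the ellipse at $x_i$ with $|v_{i-1}-v_i|=2\cos(\alpha_i/2)$ and $J=|Ax_i|\cos(\alpha_i/2)$, one finds $L=2J\sum_i|Ax_i|^{-2}$. So the quantity you set out to prove constant is exactly $L/(2J)$, and the classical invariance of the perimeter (periodic billiard polygons are critical points of the perimeter functional on inscribed $n$-gons, and a function is constant along a connected curve of its critical points) concludes immediately; this is the paper's first proof. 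The paper's second proof is, in effect, the concrete form of your Liouville strategy: it dualizes to a Poncelet polygon inscribed in a circle, complexifies, and verifies boundedness by checking precisely the cancellation between the two terms involving a vertex that runs off to an isotropic point --- the same two-term cancellation that your missing lemma encodes abstractly.
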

This result corresponds exactly to one of Reznik's observations.
We will give two proofs.  The first proof is based on the invariance of the perimeter,
mentioned above, and on the invariance of a quantity called the
{\it Joachimsthal integral\/}.  The second proof is based on Liouville's
Theorem from complex analysis.
\newline
\newline
\noindent
{\bf Second Result:\/}
Let $\beta_1,...,\beta_n$ be the angles of the
polygon formed by tangents to the ellipse at the vertices of a periodic billiard trajectory,
as shown in Figure \ref{fig:prod} for the case $n=5$. 

\begin{figure}[hbtp]
\centering
\includegraphics{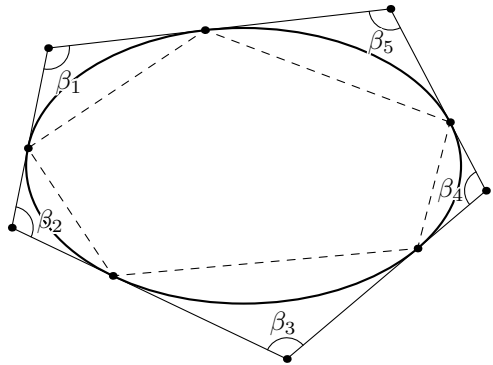}
\caption{The angles involved in Theorem \ref{thm:prodcos}.}
\label{fig:prod}
\end{figure}

\begin{theorem} \label{thm:prodcos}
The the product
\[
\prod_{i=1}^n \cos\beta_i
\]
remains constant as $P$ varies in a 1-parameter family of periodic billiard paths on an ellipse.
\end{theorem}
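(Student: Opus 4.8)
The plan is to reduce the tangent-polygon angles to the billiard angles and then run the same two-pronged strategy used for Theorem~\ref{thm:sumcos2}. First I would prove the elementary relation $\beta_i=\tfrac12(\alpha_i+\alpha_{i+1})$ (indices mod $n$). Indeed, if $\phi_i$ denotes the common angle that the two billiard edges at $P_i$ make with the tangent line $T_i$, then the reflection law gives the interior billiard angle $\alpha_i=\pi-2\phi_i$. The vertex $Q_i=T_i\cap T_{i+1}$ of the tangent polygon, together with the two tangency points $P_i,P_{i+1}$, forms a triangle whose angles at $P_i$ and $P_{i+1}$ are exactly $\phi_i$ and $\phi_{i+1}$ (the tangent–chord angles), so its angle at $Q_i$ is $\beta_i=\pi-\phi_i-\phi_{i+1}$. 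Substituting $\phi_i=(\pi-\alpha_i)/2$ yields the claim, hence $\cos\beta_i=\cos\frac{\alpha_i+\alpha_{i+1}}2$.

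Next I would bring in the Joachimsthal integral $J$. Writing the ellipse as $\langle Ax,x\rangle=1$ and setting $p_i=|AP_i|$, conservation of $J=|AP_i|\sin\phi_i$ gives $\cos(\alpha_i/2)=\sin\phi_i=J/p_i$ and $\sin(\alpha_i/2)=\cos\phi_i=s_i/p_i$ with $s_i=\sqrt{p_i^2-J^2}$. Because every member of the family is tangent to the same confocal caustic, the value of $J$ (indeed of $J^2$) is the same throughout the family. Expanding $\cos\frac{\alpha_i+\alpha_{i+1}}2$ then gives the closed formula
\[
\prod_{i=1}^n\cos\beta_i=\frac{\prod_{i=1}^n\bigl(J^2-s_is_{i+1}\bigr)}{\bigl(\prod_{i=1}^n p_i\bigr)^2},
\]
so the task becomes showing that the right-hand side does not depend on the position of $P$ within the family.

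To prove this I would pass to the complex-analytic picture used in the second proof of Theorem~\ref{thm:sumcos2}. On the elliptic curve that uniformizes the billiard trajectories tangent to the fixed caustic, the billiard map is a translation $z\mapsto z+c$, and the one-parameter family is traced out by varying $z$; the vertices of a given polygon sit at $z,z+c,\dots,z+(n-1)c$, and periodicity means $nc$ lies in the period lattice. The functions $p_i,s_i$, hence each $\cos\beta_i$, extend to meromorphic functions of $z$, so $F(z):=\prod_i\cos\beta_i$ is an elliptic function. By Liouville's theorem it then suffices to show that $F$ is holomorphic, i.e. has no poles.

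The main obstacle is exactly this pole analysis. The apparent poles of $F$ occur at the complex values of $z$ for which some tangent line $T_i$ degenerates to an isotropic line (passes through one of the circular points at infinity), equivalently where $p_i=0$; there the individual factor $\cos\beta_i$ genuinely blows up. The heart of the argument is to show that these singularities cancel in the product taken around the closed polygon. I expect to achieve this by encoding the angles through Laguerre's formula, writing $\cos\beta_i$ via the cross-ratio of $T_i,T_{i+1}$ with the two isotropic lines through $Q_i$; the contributions of the circular points then enter at each vertex in a way that telescopes around the cycle, leaving $F$ holomorphic. Once holomorphicity is established, Liouville forces $F$ to be constant, which is the assertion of the theorem. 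A purely real-geometric alternative, paralleling the first proof of Theorem~\ref{thm:sumcos2}, would instead try to establish constancy of the explicit expression above directly from the invariance of $J$ together with a perimeter-type identity; the difficulty there is that neither the numerator nor the denominator is separately invariant, so one must locate the auxiliary invariants that make their ratio telescope.
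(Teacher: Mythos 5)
Your setup is sound and in fact runs parallel to the paper's own machinery: the relation $\beta_i=\tfrac12(\alpha_i+\alpha_{i+1})$ appears in the paper, your Joachimsthal computation $\cos(\alpha_i/2)=J/p_i$ matches Theorem~\ref{thm:sumcos1}, and the global strategy (complexify, show the product is an elliptic/bounded function, invoke Liouville) is exactly the approach the paper takes for both Theorem~\ref{thm:sumcos2} and Theorem~\ref{thm:prodcos}. The problem is that everything you have actually proved is routine, and the one step you defer --- ``the main obstacle is exactly this pole analysis\dots I expect to achieve this by encoding the angles through Laguerre's formula\dots in a way that telescopes around the cycle'' --- is the entire content of the theorem. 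A Liouville argument is only as good as its boundedness proof, and you have not exhibited any cancellation mechanism; you have only conjectured one. Moreover, the mechanism you guess at (a telescoping of circular-point contributions around the cycle) is not how the cancellation actually works. In the paper's proof (Theorem~\ref{thm:circle-ellipse2}), one passes to the dual picture where the $u_i$ are unit vectors on a circle $\omega$ circumscribing a concentric ellipse $\xi$, writes $\cos^2\beta_i=\bigl(1+\langle u_{i-1},u_{i+1}\rangle\bigr)/2$, and performs a \emph{local} analysis at a vertex $p(t+I)$ going to infinity: the two factors containing that vertex blow up as $O(t^{-1})$ each, while the single factor $1+\langle a(t+I),b(t+I)\rangle$ formed by its immediate neighbors vanishes to order $t^2$, because those neighbors become antipodal and the linear correction $\vec k$ is tangent to $\omega$ at $a(I)$, forcing $\langle a(I),\vec k\rangle=0$. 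The cancellation thus hinges on two specific geometric facts (central symmetry of the limiting configuration and tangency), not on any cyclic telescoping identity, and nothing in your proposal supplies a substitute for them.

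Two further gaps would remain even if you completed the pole analysis. First, your closed formula involves $s_i=\sqrt{p_i^2-J^2}$, a two-valued function; on the complexification the sign of each $\cos\beta_i$ is not well defined, which is precisely why the paper works with $\cos^2\beta_i$ throughout and then recovers the sign of the product by a continuity argument at the end. Second, the degeneration analysis depends on the parity of $n$: for even $n$ two vertices go to infinity simultaneously, the factors under consideration can coincide, and the paper needs a separate combinatorial check, with $n=4$ (where the polygon is a rectangle) treated as a special case. Your proposal addresses neither the branch/sign issue nor the even-$n$ degenerations, so as it stands it is a correct plan with the decisive steps missing.
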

This result also corresponds exactly to one of Reznik's observations.
Our proof is the same kind of complex analysis argument that we
use for Theorem \ref{thm:sumcos2}.  We don't know a proof along the
lines of our first proof of Theorem \ref{thm:sumcos2} but see \cite{bialy2020danreznik}.
\newline
\newline
\noindent
{\bf Third Result:\/}
Figure~\ref{fig:areas} shows how we construct a polygon $Q$ starting from a Poncelet polygon $P$.

\begin{figure}[hbtp]
\centering
\includegraphics{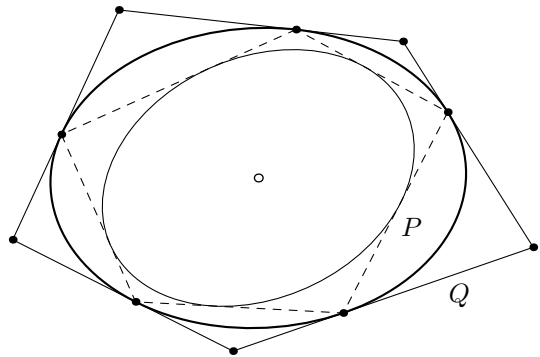}
\caption{To Theorem \ref{thm:areas}.}
\label{fig:areas}
\end{figure}

\begin{theorem} \label{thm:areas}
Let $P$ be a Poncelet $n$-gon with odd $n$, inscribed into an ellipse and circumscribed about
a concentric ellipse.  Let $Q$ be the polygon formed by the tangent lines to the outer ellipse at the vertices of $P$.
The ratio of the areas of the two polygons remains constant within the
Poncelet family containing $P$.
\end{theorem}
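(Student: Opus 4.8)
The plan is to reduce the statement to an identity between two explicit trigonometric sums and then prove that identity by the same complex-analytic mechanism used for the previous theorems. Place the outer ellipse in the normalized position $x^2/a^2+y^2/b^2=1$ (we may rotate coordinates, since concentricity is preserved) and write the vertices of $P$ as $V_i=(a\cos\theta_i,b\sin\theta_i)$. A shoelace computation, using the scalar cross product $V_i\times V_{i+1}=ab\sin(\theta_{i+1}-\theta_i)$, gives
\[
\mathrm{Area}(P)=\frac{ab}{2}\sum_{i=1}^n\sin(\theta_{i+1}-\theta_i).
\]
The vertex $W_i$ of $Q$ is the intersection of the tangents at $V_i$ and $V_{i+1}$, i.e. the pole of the chord $V_iV_{i+1}$; writing $d_i=(\theta_{i+1}-\theta_i)/2$ and $s_i=(\theta_i+\theta_{i+1})/2$ one finds $W_i=(\cos d_i)^{-1}(a\cos s_i,b\sin s_i)$, whence $W_i\times W_{i+1}=ab(\tan d_i+\tan d_{i+1})$ and
\[
\mathrm{Area}(Q)=ab\sum_{i=1}^n\tan\frac{\theta_{i+1}-\theta_i}{2}.
\]
Note that both formulas involve only the outer ellipse; the inner ellipse enters solely through the tangency constraint governing how the angles $\theta_i$ evolve. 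It therefore suffices to show that the ratio of these two sums is constant along the family.

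Next I would complexify the family. By the Griffiths--Harris description of Poncelet's porism, the configurations form the orbit of a translation $t\mapsto t+\delta$ on an elliptic curve $E$, the vertices being the images of $t,t+\delta,\dots,t+(n-1)\delta$ under the degree-two projection $E\to C$ onto the outer conic, with $n\delta\equiv 0$ enforcing closure. Consequently $w_i:=e^{i\theta_i}$ is the value at $t+(i-1)\delta$ of a single meromorphic function $W\colon E\to\CP^1$, and both area expressions above, being rational in the ratios $w_{i+1}/w_i$, become meromorphic functions of the complex parameter $t$ on the compact Riemann surface $E$. Hence $F(t)=\mathrm{Area}(Q)/\mathrm{Area}(P)$ is a meromorphic function on $E$, and to prove the theorem it is enough to show that $F$ is holomorphic, for a holomorphic function on a compact Riemann surface is constant (Liouville).

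The hard part will be the control of poles. A pole of $F$ can occur only (i) at the finitely many points where some shift of $W$ hits the two exceptional values $w=0,\infty$ of the complexified Joukowski parametrization of the ellipse (the circular points, where the shoelace cross products blow up), or (ii) where the complexified $\mathrm{Area}(P)=\frac{ab}{2}\sum\sin(\theta_{i+1}-\theta_i)$ vanishes. The crux is to verify that at every such point $\mathrm{Area}(Q)$ vanishes to at least the same order, i.e. that the divisor inequality $(\mathrm{Area}(Q))\ge(\mathrm{Area}(P))$ holds; a degree count then forces equality and hence $F$ constant. I expect this divisor bookkeeping to be where the parity of $n$ is decisive: the argument should show that for odd $n$ the complex degenerations of the inscribed polygon $P$ and of the tangential polygon $Q$ occur at the same points of $E$ with the same multiplicities, so that all candidate poles of $F$ cancel. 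For even $n$ one anticipates an extra, centrally symmetric, degeneration at which a pair of opposite tangents becomes parallel and a vertex of $Q$ escapes to infinity; this is presumably the uncancelled pole that restricts the clean ratio statement to odd $n$.
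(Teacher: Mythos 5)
Your setup is sound: the shoelace formulas $\mathrm{Area}(P)=\frac{ab}{2}\sum_i\sin(\theta_{i+1}-\theta_i)$ and $\mathrm{Area}(Q)=ab\sum_i\tan\bigl((\theta_{i+1}-\theta_i)/2\bigr)$ are correct, and the complexify-and-apply-Liouville strategy is in the spirit of the paper's proofs of the first two results. But there is a genuine gap: the entire content of the theorem is concentrated in the pole analysis, and you never carry it out. You only assert that the argument ``should show'' that the degenerations of $\mathrm{Area}(Q)$ and the zeros of $\mathrm{Area}(P)$ cancel, and that you ``expect'' and ``anticipate'' the parity of $n$ to be decisive. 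Nothing in the proposal establishes the divisor inequality $(\mathrm{Area}(Q))\ge(\mathrm{Area}(P))$, and it is precisely there that the hypothesis of odd $n$ would have to enter; as written, the argument is a plan, not a proof. Your enumeration of candidate poles is also incomplete: besides vertices of $P$ hitting the circular points and zeros of the complexified $\mathrm{Area}(P)$, the function $\sum_i\tan d_i$ has poles where some $\cos d_i=0$, i.e.\ where two consecutive tangent lines become parallel and a vertex of $Q$ escapes to infinity; this is a separate divisor that must be handled, and for concentric conics with even $n$ (centrally symmetric polygons) such degenerations genuinely interact with the combinatorics. None of this bookkeeping is done.

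For comparison, the paper's proof is entirely different and takes a few lines. First, since ratios of areas are affinely invariant, an affine change of coordinates reduces the concentric case to the confocal one. Then the Poncelet Grid Theorem supplies a single \emph{fixed} affine transformation --- scaling the axes and reflecting in the center --- which, for odd $n$, carries the inner polygon of each configuration to its outer polygon (each vertex goes to the ``opposite'' one; this is exactly where odd $n$ is used, since for even $n$ that map takes $P$ to itself rather than to $Q$). The area ratio is then the determinant of this fixed map, hence the same for every member of the family. If you wish to complete your route instead, you must actually compute local expansions of both area functions at every degeneration point of the complexified family and verify the cancellations; that is likely to be substantially harder than invoking the grid theorem.
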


This result is a mild generalization of an
observation of Reznik.
We will explain the exact relationship after giving
the proof.  The proof itself is short. It follows
readily from a result called the 
Poncelet Grid Theorem \cite{Schwartz2007poncelet,levi2007poncelet}.

\section{Proof of the First Result} \label{genfacts}

Consider an ellipse in $\R^2$ given by the equation
\begin{equation} \label{eq:ellipse}
\frac{x_1^2}{a_1^2} + \frac{x_2^2}{a_2^2} =1,
\end{equation}
or, in the matrix form, $\langle Ax,x\rangle=1$, where $A = {\rm diag} (1/a_1^2,1/a_2^2)$.  

Recall the notion of polar duality: given a smooth convex closed planar curve $\gamma$, to a
vector $x \in \gamma$ there corresponds a unique covector $x^*$ satisfying the conditions
\[
{\rm Ker}\ x^*=T_x\gamma\ {\rm and}\ (x,x^*)=1;
\]
here the parentheses denote the pairing of vectors and covectors. The points $x^*$ comprise the dual curve $\gamma^*$ in the dual plane.

Identifying vectors and covectors via Euclidean metric, we see that, for the
ellipse (\ref{eq:ellipse}), $x^* = Ax$. The polar dual curve is the ellipse given by the matrix $A^{-1}$.

The phase space of the billiard is $2$-dimensional; it consists of the inward
unit vectors $u$ with the foot point $x$ on the ellipse. The billiard transformation is shown in Figure \ref{fig:transf}. 

\begin{figure}[hbtp]
\centering
\includegraphics{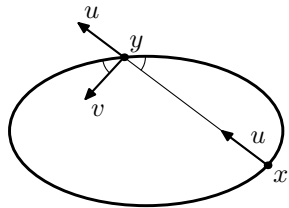} 
\caption{The billiard transformation: $(x,u) \mapsto (y,v)$.}
\label{fig:transf}
\end{figure}

Here is the key fact needed for our first result.  The quantity
$J$ in this result is known as the  Joachimsthal integral.

\begin{proposition} \label{pr:int}
  The function $J(x,u):=-(u, x^*)$ is invariant under the billiard transformation: $J(x,u)=J(y,v)$.
  \end{proposition}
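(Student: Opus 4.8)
The plan is to reduce the statement to two linear-algebraic facts about the symmetric matrix $A$: a ``chord relation'' expressing that both endpoints of the billiard chord lie on the ellipse, and the reflection law at the second endpoint. Using the identification $x^*=Ax$ coming from the Euclidean metric, the pairing becomes $(u,x^*)=\langle u,Ax\rangle$, so $J(x,u)=-\langle u,Ax\rangle$, and it suffices to prove $\langle u,Ax\rangle=\langle v,Ay\rangle$.

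First I would record the chord relation. Write $y=x+tu$ with $t=|y-x|>0$, so that $u$ is parallel to $y-x$. Both endpoints satisfy $\langle Ax,x\rangle=\langle Ay,y\rangle=1$. Expanding $\langle A(y-x),x+y\rangle$ and using the symmetry of $A$ (so that $\langle Ay,x\rangle=\langle Ax,y\rangle$), all cross terms cancel and the two diagonal terms are both equal to $1$, giving $\langle A(y-x),x+y\rangle=0$. Dividing by $t$ yields $\langle Au,x+y\rangle=0$, that is, $\langle u,Ax\rangle=-\langle u,Ay\rangle$. This is the only place where the symmetry of $A$ and the incidence of both endpoints on the ellipse are used.

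Next I would invoke the reflection law at $y$. The outward normal to the ellipse at $y$ is proportional to $Ay$ (the gradient of $\langle Ax,x\rangle$), and billiard reflection reverses the normal component of the velocity while preserving the tangential component, i.e. $v=u-2\frac{\langle u,Ay\rangle}{\langle Ay,Ay\rangle}Ay$. Pairing this with $Ay$ gives $\langle v,Ay\rangle=-\langle u,Ay\rangle$. Combining this with the chord relation from the previous paragraph yields $\langle v,Ay\rangle=\langle u,Ax\rangle$, and therefore $J(y,v)=-\langle v,Ay\rangle=-\langle u,Ax\rangle=J(x,u)$, as desired.

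The computation is genuinely short, and I do not expect a serious analytic obstacle. The only point requiring care is the bookkeeping of signs and orientations: $u$ is the inward unit vector at $x$ but points outward at $y$ as the trajectory arrives there, and one must check that the reflection formula flips exactly the normal component so that $v$ is again inward. Since the decisive identity $\langle v,Ay\rangle=-\langle u,Ay\rangle$ expresses reflection across the tangent line at $y$ and is insensitive to these conventions, the ``hard part'' is really just stating the conventions cleanly rather than any nontrivial estimate.
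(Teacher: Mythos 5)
Your proposal is correct and follows essentially the same two-step argument as the paper: the chord relation $\langle u,Ax\rangle=-\langle u,Ay\rangle$ obtained from $\langle Ax,x\rangle=\langle Ay,y\rangle=1$ together with the symmetry of $A$, and then the reflection law at $y$ giving $\langle v,Ay\rangle=-\langle u,Ay\rangle$ (the paper phrases this as tangency of $u+v$ at $y$, while you derive it from the explicit reflection formula, which is the same fact).
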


\begin{proof}
	We claim that $(u, x^*)=-(u, y^*) = (v, y^*)$. 
	For the first equality, it is enough to show that $(y-x, x^*)=-(y-x, y^*)$, because $y-x$ is parallel to $u$.
	Since $A$ is self-adjoint operator one has $(y, x^*)=(y,Ax)=(Ay,x)=(x,y^*)$. 
	Subtracting from it the equality $(x, x^*)=(y,y^*)=1$ we obtain the required one.
        Since $u$ is collinear with $y-x$, one has $\langle A(x+y),u\rangle=0$, as needed.
	The second equality holds for billiards of every shape. By the law of billiard reflection,
        the vector $u+v$ is tangent to the ellipse at point $y$, hence  $(y^*, u+v)=0$. This  completes the proof.
\end{proof}

To each billiard trajectory $P$ we can associate the quantity $J(P)=J(x,u)$, where $(x,u)$ is the
first point-vector pair associated to $P$.  

\begin{corollary}
  $J$ is constant as $P$ varies in a $1$-parameter family of periodic billiard paths in an ellipse.
\end{corollary}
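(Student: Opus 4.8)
The plan is to derive the corollary from Proposition \ref{pr:int} together with the fact, recalled in the introduction, that the trajectories in such a family share a fixed confocal caustic. First I would check that the quantity $J(P)$ is well defined, independently of which point-vector pair is taken to be the first. Applying Proposition \ref{pr:int} inductively along the successive reflections of a periodic trajectory gives $J(x_0,u_0)=J(x_1,u_1)=\cdots=J(x_{n-1},u_{n-1})$, so $J$ takes one and the same value at every vertex of $P$; hence $J(P)$ is an honest invariant of the trajectory and not merely of the chosen starting pair.

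Next I would invoke the integrable structure. Since $J$ is invariant under the billiard map $T$ acting on the two-dimensional phase space, it is a first integral, and its regular level sets are $T$-invariant curves. These level curves are exactly the sets of inward unit vectors whose trajectories are tangent to a fixed confocal ellipse, so the value of $J$ depends only on the caustic. By the Poncelet porism, all the trajectories in a given $1$-parameter family are tangent to one and the same confocal caustic; they therefore all lie on a single level set of $J$, and so $J(P)$ is constant throughout the family, as claimed.

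The hard part will be justifying the middle claim, namely that $J$ is genuinely constant on an entire caustic curve and not merely along the discrete orbit of one periodic trajectory. On a caustic with irrational rotation number this is immediate: each orbit is dense in its invariant curve, and $J$, being continuous and constant along orbits, is then constant on the whole curve. The periodic (rational) caustics that carry our $1$-parameter families are limits of such irrational caustics, so constancy of $J$ on them follows by continuity of $J$ on the phase space. Alternatively, one may simply invoke the standard fact that $J$ is precisely the first integral whose level sets foliate the phase space by caustic curves, which makes the dependence of $J$ on the caustic alone automatic.
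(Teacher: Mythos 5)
Your proposal is correct and follows essentially the same route as the paper: invariance of $J$ under the billiard map (Proposition \ref{pr:int}), density of orbits on the invariant curve of a generic (irrational rotation number) caustic, continuity to extend constancy to all caustics, and the fact that a $1$-parameter family of periodic trajectories shares a single confocal caustic. Your added remarks on well-definedness of $J(P)$ and the alternative appeal to the standard level-set description of the Joachimsthal integral are harmless refinements of the same argument.
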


\begin{proof}
  Let $E$ be a given ellipse.
  For the generic choice of caustic $E'$ -- i.e., a choice of ellipse confocal with $E$ -- the billiard
map is aperiodic and has dense orbits.  By the preceding result, $J$ is constant
on a dense set of billiard paths tangent to $E'$.   By continuity,
$J$ is constant on the space of all billiard paths tangent to almost any caustic $E'$.
By continuity, the same result holds for every caustic.
Finally, note that the
polygons in a $1$-parameter family of periodic billiard paths are
tangent to the same caustic.
\end{proof}

Here is the perimeter-invariance result mentioned in the introduction.

\begin{lemma}
\label{perimeter}
  The perimiter of $P$ is
  constant as $P$ varies in a $1$-parameter family of periodic billiard paths in an ellipse.
\end{lemma}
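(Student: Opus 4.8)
The plan is to use the first variation of the perimeter functional and to observe that the law of billiard reflection is exactly the condition that this functional is stationary under the motion of each vertex along the ellipse. Let $E$ denote the ellipse and parametrize the Poncelet family by $t$, so that the periodic trajectory $P(t)$ has vertices $x_1(t),\dots,x_n(t)\in E$, depending smoothly on $t$ (indices taken cyclically, $x_{n+1}=x_1$). The perimeter is
\[
L(t)=\sum_{i=1}^n |x_{i+1}(t)-x_i(t)|,
\]
and the goal is to prove $L'(t)\equiv 0$.

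First I would differentiate each edge length. Writing $e_i=(x_{i+1}-x_i)/|x_{i+1}-x_i|$ for the unit vector along the $i$-th edge, one has $\frac{d}{dt}|x_{i+1}-x_i|=\langle e_i,\dot x_{i+1}-\dot x_i\rangle$. Summing over $i$ and regrouping the terms according to the vertex on which they act — the vertex $x_i$ receives $+\langle e_{i-1},\dot x_i\rangle$ from the $(i-1)$-st edge and $-\langle e_i,\dot x_i\rangle$ from the $i$-th edge — gives
\[
L'(t)=\sum_{i=1}^n \langle e_{i-1}-e_i,\ \dot x_i\rangle.
\]

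The final step is to show that each summand vanishes separately. The velocity $\dot x_i$ is tangent to $E$ at $x_i$, since $x_i(t)$ moves along the ellipse; differentiating $\langle Ax_i,x_i\rangle=1$ gives $\langle \dot x_i, x_i^*\rangle=0$. On the other hand, the reflection law at $x_i$ states that the outgoing direction $e_i$ is the mirror image of the incoming direction $e_{i-1}$ across the tangent line $T_{x_i}E$, so $e_{i-1}-e_i$ is a multiple of the normal, i.e.\ parallel to $x_i^*=Ax_i$. Hence $e_{i-1}-e_i$ is orthogonal to $\dot x_i$, every term is zero, and $L'(t)=0$.

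I expect the only point requiring care to be this last one, and it is conceptual rather than computational: although the velocities $\dot x_i$ are \emph{not} independent (they are coupled through the billiard dynamics defining the family), the argument never uses independence — the constrained partial derivative of $L$ with respect to each single vertex already vanishes by the reflection law, so any admissible joint motion leaves $L$ stationary. The same computation shows the statement is not special to ellipses: it proves perimeter-invariance for the Poncelet family of any smooth convex billiard, the ellipse entering only through the existence of such families via its confocal caustics.
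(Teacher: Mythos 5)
Your proof is correct and is essentially the paper's argument made explicit: the paper invokes the fact that an $n$-periodic billiard path is a critical point of the perimeter functional on inscribed $n$-gons and that a function is constant along a curve of its critical points, while you carry out the underlying first-variation computation --- the regrouped sum $\sum_i \langle e_{i-1}-e_i,\ \dot x_i\rangle$ vanishing term by term (normal difference of directions against tangent velocity) is precisely that criticality statement combined with the chain rule. Your closing remarks are also accurate and consistent with the paper's reasoning: the coupling of the $\dot x_i$ through the dynamics is irrelevant, and the argument is not special to ellipses but applies to any smooth convex billiard admitting a one-parameter family of periodic orbits.
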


\begin{proof}
A $n$-periodic billiard path
extremizes the perimeter among the $n$-gons inscribed in the billiard table.
The 1-parameter family of periodic polygons is a curve in the space of inscribed polygons consisting of
critical points of the perimeter function. A function has a constant value on a curve of its critical points.
See \cite{tabachnikov2005geometry} for more details.
\end{proof}

The following identity, also noticed experimentally by Reznik,
immediately implies Theorem \ref{thm:sumcos2}.
The reason is that both the quantities $J$ and $L$ are
constant for  $1$-parameter families of periodic 
billiard paths in ellipses.

\begin{theorem}
	\label{thm:sumcos1}
For an $n$-periodic billiard trajectory $P$ in an ellipse, one has
\[
\sum_{i=1}^n \cos \alpha_i =  JL -n,
\]
where $L$ and $J$ are the perimeter and the value of the Joachimsthal integral of $P$.
\end{theorem}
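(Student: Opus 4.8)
The plan is to reduce the sum to a per-vertex quantity using the reflection law, express that quantity through the Joachimsthal integral $J$, and then recover the perimeter $L$ by a telescoping argument. I label the vertices of $P$ cyclically as $x_1,\dots,x_n$ (indices mod $n$), let $u_i$ be the inward unit vector pointing from $x_i$ to $x_{i+1}$, so that $x_{i+1}-x_i=\ell_i u_i$ with $\ell_i=|x_{i+1}-x_i|$ and $L=\sum_i\ell_i$, and write $x_i^{*}=Ax_i$ and $\nu_i=x_i^{*}/|x_i^{*}|$ for the unit normal at $x_i$. Since $\alpha_i$ is the interior angle at $x_i$, between $x_{i-1}-x_i=-\ell_{i-1}u_{i-1}$ and $x_{i+1}-x_i=\ell_i u_i$, we have $\cos\alpha_i=-\langle u_{i-1},u_i\rangle$. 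The billiard reflection at $x_i$ keeps the tangential component and flips the normal one (equivalently, $u_{i-1}+u_i$ is tangent at $x_i$, as in the proof of Proposition~\ref{pr:int}), so $u_{i-1}-u_i=2\langle u_{i-1},\nu_i\rangle\,\nu_i$ and therefore $\cos\alpha_i=2\langle u_{i-1},\nu_i\rangle^{2}-1$. By Proposition~\ref{pr:int} applied to the chord from $x_{i-1}$ to $x_i$ one has $\langle u_{i-1},x_i^{*}\rangle=J$, whence $\langle u_{i-1},\nu_i\rangle=J/|x_i^{*}|$ and
\[
\cos\alpha_i=\frac{2J^{2}}{|x_i^{*}|^{2}}-1 .
\]
Summing over $i$ gives $\sum_{i=1}^n\cos\alpha_i=2J^{2}\sum_{i=1}^n |x_i^{*}|^{-2}-n$, so the whole theorem reduces to the single identity $2J\sum_i |x_i^{*}|^{-2}=L$.

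To establish that identity I would use the explicit normal displacement obtained above, namely $u_{i-1}-u_i=2J\,x_i^{*}/|x_i^{*}|^{2}$. Pairing this with $x_i$ and invoking $\langle x_i^{*},x_i\rangle=\langle Ax_i,x_i\rangle=1$ yields
\[
\langle u_{i-1}-u_i,\,x_i\rangle=\frac{2J}{|x_i^{*}|^{2}} .
\]
Summing the left-hand side over $i$ and reindexing the first piece by $i\mapsto i+1$, so that $\sum_i\langle u_{i-1},x_i\rangle=\sum_i\langle u_i,x_{i+1}\rangle$, collapses the sum to $\sum_i\langle u_i,\,x_{i+1}-x_i\rangle=\sum_i\ell_i\langle u_i,u_i\rangle=\sum_i\ell_i=L$. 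Hence $2J\sum_i|x_i^{*}|^{-2}=L$, and combining with the previous paragraph gives $\sum_{i=1}^n\cos\alpha_i=J\cdot L-n=JL-n$, as claimed.

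The conceptual core is short: the reflection law converts each $\cos\alpha_i$ into the squared cosine of the incidence angle at $x_i$, and the Joachimsthal relation $\langle u_{i-1},x_i^{*}\rangle=J$ turns that into $J^2/|x_i^{*}|^2$. The one genuinely clever step, and the place I expect the main work to sit, is the telescoping identity $2J\sum_i|x_i^{*}|^{-2}=L$: there is no term-by-term formula expressing $\ell_i$ through the normals at a single endpoint, so the matching of the two sums is global and must be produced by the cyclic reindexing above. Beyond that, the only care required is with signs and conventions, namely fixing the orientation of $u_i$ and of $J=-\langle u_i,x_i^{*}\rangle$ consistently (so that $\langle u_{i-1},x_i^{*}\rangle=+J$ at the arrival endpoint), and confirming that $\alpha_i$ denotes the interior angle of the billiard polygon as in Figure~\ref{fig:angles}.
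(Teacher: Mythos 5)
Your proof is correct and is essentially the paper's own argument, just reorganized: your telescoping identity $\sum_i\langle u_{i-1}-u_i,x_i\rangle=L$ is the paper's ``discrete integration by parts'' for the perimeter, your reflection-law formula $u_{i-1}-u_i=2\langle u_{i-1},\nu_i\rangle\,\nu_i$ and Joachimsthal relation $\langle u_{i-1},x_i^*\rangle=J$ are exactly its next two displayed equations, and your vector identity $\cos\alpha_i=2\langle u_{i-1},\nu_i\rangle^2-1$ is the double-angle formula $\cos\alpha_i=2\cos^2(\alpha_i/2)-1$ in disguise. The only difference is organizational: you derive the per-vertex formula $\cos\alpha_i=2J^2/|x_i^*|^2-1$ first and then prove $2J\sum_i|x_i^*|^{-2}=L$ by the cyclic reindexing, whereas the paper runs the same three ingredients in the opposite order, computing $JL$ as a single sum of terms $2\cos^2(\alpha_i/2)$.
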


\begin{proof}
We compute
\[
L = \sum_i \langle p_{i+1}-p_i,u_i\rangle = \sum_i \langle p_i,u_{i-1}\rangle - \langle p_i,u_i\rangle = \sum_i \langle p_i,u_{i-1}-u_i\rangle 
\]
(this is a discrete version of integration by parts). By the law of billiard reflection,
\[
u_{i-1}-u_i = 2 \sin \left(\frac{\pi-\alpha_i}{2}\right) \frac{p_i^*}{|p_i^*|} = 2 \cos \left(\frac{\alpha_i}{2}\right) \frac{p_i^*}{|p_i^*|}.
\]
Also one has
\[
J = -(u_i,p_i^*) = -|p_i^*| \cos \left(\pi-\frac{\alpha_i}{2}\right) =  |p_i^*| \cos \left(\frac{\alpha_i}{2}\right).
\]
Since $(p_i,p_i^*)=1$, it follows that
\[
JL=  \sum_i 2\cos^2 \left(\frac{\alpha_i}{2}\right) =  \sum_i (1+\cos \alpha_i) = 
 n + \sum_i \cos \alpha_i,
\]
as needed.
\end{proof}

\begin{remark} \label{rmk:Bialy}
See [B] for a symplectic interpretation of these ideas.
\end{remark}

\section{Dual Minkowski billiards} \label{sect:dual}

Before we get to our complex analysis-based proofs, it is useful to discuss
billiards in Minkowski metrics.  This is used to construct dual billiards, which offer new generalizations and help proving the main results.

A~{\it Minkowski metric\/} is a norm on a vector space.
Let $U$ be an $n$-dimensional vector space and $V=U^*$ be its dual.
 Assume that $U$ and $V$ are equipped with Minkowski metrics, not necessarily centrally-symmetric 
and dual to each other. Let $M \subset U$ be the unit co-ball of the metric in 
$V$ and $N \subset V$ be he unit co-ball of the metric in $U$. 
One has two billiards: $M$ in normed space $U$, and $N$ in normed space $V$. 

\begin{lemma}
The two billiards systems are isomorphic.
\end{lemma}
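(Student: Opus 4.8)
The plan is to realize both billiards on a common phase space and then exhibit an explicit conjugacy coming from polar duality. Let $*\colon \partial M \to \partial N$ be the duality map sending $x\in\partial M$ to the unique covector $x^*\in\partial N$ with $\langle x^*,x\rangle=1$; geometrically $x^*$ is the conormal of $\partial M$ at $x$, rescaled to land on $\partial N=M^\circ$. Its inverse is the analogous map $\partial N\to\partial M$, which I also denote by $*$. The feature that makes the two systems interchangeable is that here the billiard tables coincide with the unit balls of the ambient norms; consequently the single map $*$ plays two roles at once: it is the polar duality between the tables $M$ and $N$, and it is the Legendre transform of each norm, carrying a unit direction to the unit covector dual to it.

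First I would record the reflection law in these dual variables. Encode a trajectory of the billiard in $M$ by its vertices $x_i\in\partial M$ together with the momenta $p_i:=v_i^{\,*}\in\partial N$, where $v_i$ is the $U$--unit vector along $x_{i+1}-x_i$. Extremizing the $U$--length $\sum\|x_{i+1}-x_i\|$ subject to $x_i\in\partial M$ yields, via a Lagrange multiplier, two symmetric relations: the chord direction satisfies $x_{i+1}-x_i \parallel p_i^{\,*}$ (a positive multiple), and the momentum jump $p_i-p_{i-1}$ is conormal to $\partial M$ at $x_i$, i.e. $p_i-p_{i-1}\parallel x_i^{\,*}$. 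The same computation with $M,N$ and the two copies of $*$ interchanged describes the billiard in $N$: a trajectory $\xi_j\in\partial N$ with momenta $q_j\in\partial M$ obeys $\xi_{j+1}-\xi_j\parallel q_j^{\,*}$ and $q_j-q_{j-1}\parallel \xi_j^{\,*}$.

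The two reflection laws are now visibly the same system with the roles of position and momentum exchanged. I would therefore define the conjugating map on phase space by
\[
\Psi(x_i,p_i)=(p_i,\,x_{i+1}),
\]
sending a state of the $M$--billiard (vertex $x_i$, outgoing momentum $p_i$) to the state of the $N$--billiard with position $p_i\in\partial N$ and momentum $x_{i+1}\in\partial M$. A direct check using the two relations above shows that $\Psi$ intertwines the billiard maps: if $F_M(x_i,p_i)=(x_{i+1},p_{i+1})$ and $F_N$ is the $N$--billiard map, then $F_N\bigl(\Psi(x_i,p_i)\bigr)=(p_{i+1},x_{i+2})=\Psi\bigl(F_M(x_i,p_i)\bigr)$, since $p_{i+1}-p_i\parallel x_{i+1}^{\,*}$ and $x_{i+2}-x_{i+1}\parallel p_{i+1}^{\,*}$ are exactly the two relations for the $M$--billiard read as the direction and jump laws of the $N$--billiard. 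Strict convexity of the tables makes $x_i$ recoverable from $(p_i,x_{i+1})$ as the second intersection of the line through $x_{i+1}$ in direction $p_i^{\,*}$ with $\partial M$, so $\Psi$ is a bijection, and a diffeomorphism in the smooth strictly convex case. This is the desired isomorphism.

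The main thing to get right is the reflection law in the non-Euclidean setting: one must verify that the Legendre gradient of the norm at a unit direction really is the polar-dual point $p^{\,*}$, and that the self-dual hypothesis lets the one map $*$ serve simultaneously as table-duality and as norm-duality, so that the relations for the two billiards coincide after swapping position and momentum. The remaining points are routine: fixing consistent orientation and inward/outward conventions, and invoking smoothness and strict convexity to upgrade $\Psi$ from a bijection to a diffeomorphism. Alternatively one could argue that both maps are exact symplectic with the common generating function $(x,y)\mapsto\|y-x\|$, which is symmetric in its two arguments, but the explicit $\Psi$ above is cleaner and self-contained.
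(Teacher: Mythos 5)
Your proof architecture---encode a trajectory by vertices together with Legendre-dual momenta, observe that the chord law and the momentum-jump law form a system symmetric under exchanging position and momentum, and conjugate by $\Psi(x_i,p_i)=(p_i,x_{i+1})$---is exactly the mechanism behind the paper's proof (the alternating sequence $(\ldots,q_i,p_i,q_{i+1},\ldots)$ of Gutkin--Tabachnikov). However, you have silently changed the hypotheses. You assume $\partial N=\partial(M^\circ)$ and that ``the billiard tables coincide with the unit balls of the ambient norms,'' i.e.\ that the norm on $V$ is dual to the norm on $U$. The lemma is stated precisely without this assumption: the two norms are arbitrary (``not necessarily \ldots dual to each other''), $M$ is the co-ball (polar of the unit ball) of the $V$-norm, and $N$ is the co-ball of the $U$-norm; in general $N\neq M^\circ$. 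This is not a harmless normalization, because it excludes exactly the case the paper needs: in Section 4 the lemma is applied with $M$ an ellipse with half-axes $a_1,a_2$ in the Euclidean plane $U$, and $N$ the unit disc in $V$, where the $V$-norm has unit ball $M^\circ$. There the table $M$ is not the unit ball of the Euclidean norm, and $N\neq M^\circ$ unless $a_1=a_2=1$. So, as written, your argument proves only the self-dual special case and not the statement the paper actually uses.

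The gap is repairable without changing the skeleton of your argument; what must change is the bookkeeping of dualities, since there are two distinct polarity maps in play and your single map $*$ conflates them. The momentum $p_i$ (Legendre dual of the $U$-unit chord direction) lies on $\partial B_U^\circ=\partial N$---this is exactly why $N$ is \emph{defined} as the co-ball of the $U$-norm---and the inverse Legendre transform of the $U$-norm coincides with the conormal (polarity) map of $N$, so the chord law reads $x_{i+1}-x_i\parallel \mathcal{P}_N(p_i)$. By contrast, the Euler--Lagrange jump condition says $p_i-p_{i-1}$ is conormal to $\partial M$ at $x_i$, and that conormal, normalized, lies on $\partial M^\circ=\partial B_V$, which in general is \emph{not} $\partial N$; call it $\mathcal{P}_M(x_i)$. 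The billiard in $N$ satisfies the same two relations with $\mathcal{P}_M$ and $\mathcal{P}_N$ interchanged, because its momenta lie on $\partial B_V^\circ=\partial M$, again by the crossed definition of the tables. Once the two maps are kept distinct, your swap-and-shift conjugacy $\Psi$ goes through verbatim in the general setting. In other words, the ``feature that makes the two systems interchangeable'' is not self-duality of the norms, as you claim, but the crossed structure: each table is polar dual to the unit ball of the \emph{other} system's norm.
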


\begin{proof}
This is proved in \cite{gutkin2002billiards}, section 7.  Here we sketch the ideas.
Abusing notation, denote by $\mathcal{D}$ the polar duality that identifies the unit 
spheres and co-spheres of the metrics. The phase space of the billiard in $M$ consists of 
the pairs $(q,u)$ where $q\in \partial M$ and $u$ is a (Minkowski) unit inward vector at $q$. 
Assign to it the pair $(q,p) \in \partial M \times \partial N$ where $p=\mathcal{D}(u)$. 
Likewise, the phase space of the billiard in $N$ consists of the pairs $(p,v)$ where 
$p\in \partial N$ and $v$ is a (Minkowski) unit inward vector at $p$. Assign to it 
the pair $(p,q) \in \partial N \times \partial M$ where $q=\mathcal{D}(v)$.

The assertion  is that a sequence $(\ldots,q_i,p_i,q_{i+1},\ldots)$ corresponds to a 
billiard orbit in $M$ if and only if the sequence $(\ldots,p_i,q_{i+1},p_{i+1},\ldots)$ corresponds to a billiard orbit in $N$.  
If $M$ is an ellipse with half-axes $a_1$ and $a_2$ in the Euclidean plane $U$, then $N$ is 
the unit disc in the plane $V$ whose unit ball is polar to $M$, i.e., is an ellipse with half-axes $1/a_1$ and $1/a_2$.
\end{proof}

\begin{remark}
The affine map $(x_1,x_2)\rightarrow (a_1x_1, a_2x_2)$ isometrically maps the 
Minkowski plane $V$ to the Euclidean plane, taking $N$ back to the ellipse with half-axes $a_1$ and $a_2$, that is, to $M$.
 One obtains a symmetry of the billiard in an ellipse, called the skew hodograph 
transformation and discovered by A. Veselov  \cite{veselov1988integrable,veselov1991integrable}.
\end{remark}

\begin{remark}
It was noticed in \cite{arstein-avidan2014bounds} that minimal action of a billiard in 
Minkowski metric is related with Ekeland--Hofer--Zehnder capacity, which made it possible 
to apply symplectic geometry to convex geometry problems. In particular it was shown that 
the famous Mahler conjecture followed from the Viterbo's 
conjecture~\cite{arstein-avidan2014fromsympelctic, akopyan2016elementary}.
\end{remark}

\section{Another Proof of the First Result}

As before, let $u_1,\ldots,u_n$ be the unit vectors along the sides of $P$. These vectors are points of
the unit circle $\omega$ centered in the origin $O$, and they form an $n$-periodic trajectory 
of the Minkowski billiard in $\omega$ with the Minkowski metric defined by the ellipse dual 
to the original one. Therefore $u_1,\ldots,u_n$ are the vertices of a Poncelet $n$-gon, 
inscribed in $\omega$ and circumscribed about some ellipse $\xi$ centered at $O$.

\begin{figure}[hbtp]
\centering
\includegraphics{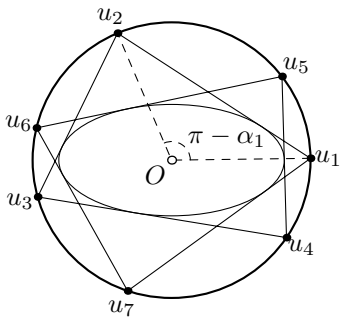}
\caption{To  Theorems ~\ref{thm:circle-ellipse1} and \ref{thm:circle-ellipse2}.}
\label{inter}
\end{figure}

The angles of $P$  satisfy  $\alpha_i = \pi - \angle u_{i-1} O u_{i}$,
thus 
the next result is equivalent to Theorem \ref{thm:sumcos2}.  

\begin{theorem}
	\label{thm:circle-ellipse1}
	Let $u_1,\ldots, u_n$ be a Poncelet polygon inscribed in a circle $\omega$ with center $O$ and circumscribed about an ellipse $\xi$ with the same center. Then 
\[
\sum_{i=1}^n \cos \angle u_i O u_{i+1}
\]	
is constant in the 1-parameter family of Poncelet $n$-gons.
\end{theorem}

\begin{proof}
Following the approach of \cite{schwartz2015pentagram}, we will complexify the situation, that is, 
extend the setting to Poncelet polygons on the conics given by the same equations in the complex plane. 
We show that the function in question is  bounded, and then the Liouville theorem  implies that this function is constant.
To extend our function to complex plane, we need to represent the function $\cos \angle u_i O u_{i+1}$ in a 
more convenient way. Since $|Ou_i|=1$ for all $i$, we have:
\[\cos \angle u_i O u_{i+1}= \langle u_i, u_{i+1} \rangle .\]
In other words, for the proof of the first statement, we need to show that the sum $\langle u_i, u_{i+1} \rangle$ is constant. 
Let us emphasize that here we consider the usual dot product, not the Hermitian one. 

Consider the standard rational parametrization of the circle $\omega$:
\[
	p(t)=\left(\frac{2t}{t^2+1}, \frac{t^2-1}{t^2+1}\right).
\]
The points at infinity correspond to the value of the parameter $t= \pm I$. 
Here $I=\sqrt{-1}$.
It is clear that the only possibility for the Poncelet polygon to have an 
infinite $\sum\langle u_i, u_{i+1} \rangle$ is to have one of its vertex at infinity.

Let us show that when a vertex goes to infinity, the inner 
products in which this vertex participates cancel each other. 

Consider a point at infinity, say,  $p(I)$. We claim that its two neighboring vertices
 of the Poncelet polygon, denoted by $a(I)$ and $b(I)$, are opposite points of $\omega$.
Indeed, the lines $p(I)a(I)$ and $p(I)b(I)$ are tangent to $\xi$ and are parallel, 
therefore the tangency points of these lines with $\xi$ are symmetric with respect to $O$, 
and hence their intersection points with $\omega$ are also symmetric (the point $p(I)$ is invariant under the reflection in $O$, given by $t \mapsto -1/t$). 
Thus, for any finite point $q$ on $\omega$, we have 
$\langle q, a(I) \rangle + \langle q, b(I) \rangle=0.$

Now, consider point $p(t+I)$ with $t$ tending to zero and its neighboring 
vertices $a(I+t)$ and $b(I+t)$ of the Poncelet polygon. Notice that $p(t+I)$ tends to 
infinity as $O(1/t)$, while $a(t+I)$, $b(t+I)$ tend to their limit $a(I)$, $b(I)$ linearly. 
Furthermore, due to the symmetry, 
as $t$ goes to zero, the linear in $t$ terms are vectors with the same absolute value and the opposite directions:
\[
	a(t+I)=a(I)+\vec k  \cdot t +O(t^2), \ \ 
	b(t+I)=-a(I)-\vec k  \cdot t +O(t^2).
\]
Now we can bound above the sum for small $t$:
\begin{multline*}
	\label{eq:sum of neighbors}
	\langle
	p(t+I), a(t+I)
	\rangle
	+
	\langle
	p(t+I), b(t+I)
	\rangle
	=
	\langle
	p(t+I),a(t+I)+b(t+I)
	\rangle
	=\\
	\langle
	O(1/t), O(t^2)
	\rangle= O(t).
\end{multline*}
That is, the sum tends to zero as $t$ goes to $0$, and therefore it is bounded.
\end{proof}

\begin{figure}[hbtp]
\centering
\includegraphics{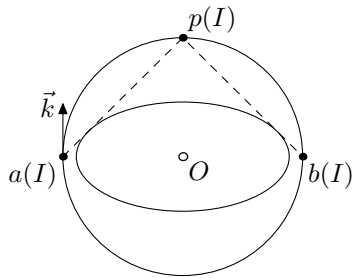}
\caption{The behavior of the polygon at infinity.}
\label{infin}
\end{figure}
\medskip

\section{Proof of the Second Result}

Referring to the construction in the previous section,
the angles $\beta_i$ in Figure \ref{fig:prod} are given by the formula
\[
\beta_i = \pi - \frac{\angle u_{i-1} O u_{i+1}}{2}=\angle u_{i-1}u_{i}u_{i+1}.
\]

\begin{theorem}
	\label{thm:circle-ellipse2}
	Let $u_1,\ldots, u_n$ be a Poncelet polygon inscribed in a circle $\omega$ with center $O$ and circumscribed about an ellipse $\xi$ with the same center. Then
\[
\ \prod_{i=1}^n \cos  \angle u_{i-1}u_{i}u_{i+1}
\]	
is constant in the 1-parameter family of Poncelet $n$-gons.
\end{theorem}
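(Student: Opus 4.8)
The plan is to follow verbatim the complex-analytic strategy of Theorem~\ref{thm:circle-ellipse1}: complexify the Poncelet family, exhibit the product $\prod_i\cos\angle u_{i-1}u_iu_{i+1}$ as a single-valued function on the complexified family, check that it is bounded, and conclude by Liouville's theorem that it is constant. The first task is to put the product in a form that extends holomorphically. Writing the interior angle at $u_i$ through the edge vectors gives $\cos\angle u_{i-1}u_iu_{i+1}=\langle u_{i-1}-u_i,u_{i+1}-u_i\rangle/(|u_{i-1}-u_i|\,|u_{i+1}-u_i|)$, and since each squared edge length $|u_j-u_{j+1}|^2=\langle u_j-u_{j+1},u_j-u_{j+1}\rangle$ occurs exactly twice in the full product of denominators, one obtains
\[
\prod_{i=1}^n \cos\angle u_{i-1}u_iu_{i+1}=\frac{\prod_{i=1}^n\langle u_{i-1}-u_i,\,u_{i+1}-u_i\rangle}{\prod_{j=1}^n\langle u_j-u_{j+1},\,u_j-u_{j+1}\rangle}.
\]
Every factor on the right is a bilinear form in the coordinates of the vertices, so the square roots have disappeared and the expression is a rational function of the $u_j$; it extends to the complexified conics exactly as in the proof of Theorem~\ref{thm:circle-ellipse1}, with the vertices carried by the rational parametrization $p(t)$.

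Next I would isolate the points where this rational function can fail to be finite. A numerator factor can blow up only if one of the vertices escapes to infinity, and a denominator factor $\langle u_j-u_{j+1},u_j-u_{j+1}\rangle$ can vanish only if the edge $u_ju_{j+1}$ is isotropic, that is, if the line through $u_j$ and $u_{j+1}$ passes through one of the circular points at infinity $p(\pm I)$. Because both endpoints already lie on $\omega$ and a line meets $\omega$ in only two points, an edge is isotropic precisely when one of its endpoints is a circular point at infinity. Hence, exactly as for the sum, every candidate singularity of the product occurs at a value of the parameter for which some vertex reaches infinity; away from these points the function is manifestly holomorphic.

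The heart of the matter is the local behavior at such a point, and here I would refine the analysis from the proof of Theorem~\ref{thm:circle-ellipse1}. Let a vertex $u_i$ run to the circular point $p(I)$. Its two neighbors tend to antipodal points $a(I)$ and $-a(I)$ of $\omega$, so the factor of the product attached to $u_i$ itself degenerates: since $\langle u_{i-1},u_{i+1}\rangle\to\langle a(I),-a(I)\rangle=-1$, the corresponding cosine tends to $0$. At the same time the two edges issuing from $u_i$ become infinitely long, so the two factors attached to the neighboring vertices $u_{i-1}$ and $u_{i+1}$ blow up. The content of the estimate is that the single zero supplied by the infinite vertex exactly cancels the two poles supplied by its neighbors, leaving the product with a finite, nonzero limit. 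Equivalently, passing to the square via $\cos^2\angle u_{i-1}u_iu_{i+1}=\tfrac12\bigl(1+\langle u_{i-1},u_{i+1}\rangle\bigr)$, one must show that $1+\langle u_{i-1},u_{i+1}\rangle$ vanishes to precisely second order in the local parameter as $u_i\to p(I)$, so that its product with the two neighboring factors, each of size of order $t^{-1}$, stays bounded.

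The main obstacle is precisely this order count. The crude first-order expansion of the neighbors used for the sum in Theorem~\ref{thm:circle-ellipse1} is not by itself enough here: one must control the second-order behavior of the Poncelet family near an infinite vertex and verify that the vanishing of $1+\langle u_{i-1},u_{i+1}\rangle$ is of exactly the right order to offset the two diverging factors, rather than of a higher order (which would force the spurious conclusion that the product tends to $0$). Once this boundedness is established at every infinite-vertex point, the function is bounded and holomorphic on the whole complexified family, and Liouville's theorem forces it to be constant; restricting to the real Poncelet family proves Theorem~\ref{thm:circle-ellipse2}.
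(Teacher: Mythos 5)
Your setup coincides with the paper's: complexify the Poncelet family, reduce constancy to boundedness of the product near parameter values where a vertex reaches a circular point at infinity, and invoke Liouville. Your rational-function rewriting of the product is a legitimate alternative to the paper's device of passing to squares via $\cos^2\angle u_{i-1}u_iu_{i+1}=\tfrac12\bigl(1+\langle u_{i-1},u_{i+1}\rangle\bigr)$ (with the sign fixed by continuity). But your proof stops exactly where the real work begins: you declare that ``one must show that $1+\langle u_{i-1},u_{i+1}\rangle$ vanishes to precisely second order'' and label this ``the main obstacle,'' without proving it. That estimate is the entire content of the paper's proof, and it needs no new control of the Poncelet family beyond what was already established for Theorem~\ref{thm:circle-ellipse1}. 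Writing the infinite vertex as $p(t+I)$, its neighbors expand as $a(t+I)=a(I)+\vec k\,t+O(t^2)$ and $b(t+I)=-a(I)-\vec k\,t+O(t^2)$ (central symmetry, exactly as in the previous proof). Hence
\[
1+\langle a(t+I),b(t+I)\rangle = 1-\langle a(I),a(I)\rangle-2\langle a(I),\vec k\rangle\,t+O(t^2) = -2\langle a(I),\vec k\rangle\,t+O(t^2),
\]
since $\langle a(I),a(I)\rangle=1$ on the complexified circle. The punchline --- the one idea missing from your writeup --- is that $\vec k$ is the velocity of a curve lying on $\omega$, so differentiating $\langle a,a\rangle=1$ gives $\langle a(I),\vec k\rangle=0$; thus the factor is $O(t^2)$ and exactly offsets the two $O(t^{-1})$ factors attached to the neighbors. (Incidentally, your worry about vanishing ``of a higher order'' is misplaced: for Liouville you only need boundedness, so order at least two suffices; higher-order vanishing would make the product tend to $0$ there, which is still bounded, and is excluded a posteriori since constancy would then force the product to vanish identically, contradicting the real family.)

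A second, smaller omission: your singularity analysis implicitly treats one infinite vertex at a time. For even $n$ the vertices $u_i$ and $u_{i+n/2}$ reach the two circular points simultaneously, so a vanishing factor attached to one infinite vertex could a priori coincide with a blowing-up factor attached to the other, invalidating the local cancellation. The paper disposes of this with a short combinatorial check: the relevant factors can coincide only when $n=4$, in which case every polygon in the family is a rectangle and the claim is immediate.
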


\begin{proof}
Since the product of cosines changes continuously, 
if the absolute value of this product is constant in the family, then its sign is fixed as well.
Therefore, instead of the product of cosines, we may consider the product of their squares:
\[
\cos^2 \angle u_{i-1} u_{i} u_{i+1}= \cos^2\frac{\angle u_{i-1}Ou_{i+1}}{2}=\frac{1+ \cos {\angle u_{i-1}Ou_{i+1}}}{2} = \frac{1+\langle u_{i-1}, u_{i+1} \rangle}{2}.
\]
Thus we need to prove that the product $\prod (1+\langle u_{i-1}, u_{i+1} \rangle)$ is bounded.
Again the only possibility for this product to be infinite is when one of the vertices goes to infinity.

Similarly to the previous proof, and using the same parameterization of the circle,
we assume that $p(t+I)$ is the vertex that goes to infinity, $a(t+I)$ and $b(t+I)$ are its neighboring vertices, and $a'(t+I)$ and $b'(t+I)$ are its second removed neighbors (which are also centrally symmetric, but it plays no role here).

Let us show that the corresponding product is bounded:
\begin{multline*}
	\label{eq:sum of neighbors}
(1+ \langle 	p(t+I), a'(t+I) \rangle)
(1+ \langle 	p(t+I), b'(t+I) \rangle)
(1+ \langle 	a(t+I), b(t+I) \rangle)=\\
O(t^{-1})\cdot O(t^{-1})\cdot (1+\langle a(I)+\vec k  
\cdot t +O(t^2), -a(I)-\vec k  \cdot t +O(t^2) \rangle)=\\
O(t^{-2})\cdot(1+\langle a(I), -a(I)\rangle -2 \langle a(I), \vec k \cdot t \rangle + O(t^2))=\\
O(t^{-2})\cdot(-2 \langle a(I), \vec k \cdot t \rangle + O(t^2))=
O(t^{-2})\cdot(-2 \langle a(I), \vec k \cdot t \rangle )+ O(1).
\end{multline*}
It is left to notice that $\vec k$ is tangent to $\omega$ at $a(I)$, 
therefore $\langle a(I), \vec k \cdot t \rangle =0.$
Thus  the product is bounded.

If $n$ is  odd, then only one vertex can go to infinity: if $p(\pm I)$ is a vertex of the Poncelet polygon, then $p(\mp I)$ is not its vertex. 

For even $n$, a vertex $u_i$ goes to infinity simultaneously with $u_{i+n/2}$.
A simple combinatorial analysis of the configurations shows that the only case when 
the above considered factors  coincide is when $n=4$.  
In that case the polygon is always a rectangle, and the statement is obvious.
\end{proof}

\section{Variants and Generalizations} \label{var1}

Here we list some variants and generalizations of the results we
have proved so far.
\newline
\newline
{\bf Multi-dimensional version:\/}
{\rm The billiard inside an ellipsoid in $\R^{n+1}$ is also completely integrable: the phase space is $2n$-dimensional, 
the trajectories are confined to $n$-dimensional tori, and the motion on these tori is quasi-periodic, 
see \cite{dragovic2011poncelet}. In particular, if a point is periodic, then all points of 
the torus are periodic with the same period, and the respective polygons have the same perimeters. 
The billiard map still has the Joachimsthal integral, constant on the orbits confined to an 
invariant torus, and the above arguments  go through, proving a multi-dimensional version of Theorem \ref{thm:sumcos2}.
\newline
\newline
\noindent
{\bf Sizes of the Angles:\/}
Concerning Theorem \ref{thm:prodcos},
it was pointed out to us by M.~Bialy that this theorem implies 
that the sign of the quantity $\beta_i - \pi/2$ remains fixed during the rotation of the polygon. 
 This  is consistent with the fact that all vertices of $Q$ lie on a ellipse polar dual to 
the inner ellipse (the caustic) with respect to the outer one. If we fix this outer ellipse 
and vary the caustic, then these polar dual ellipses, into which the polygons $Q$ are inscribed, 
form a pencil of conics. This pencil contains the \emph{orthoptic circle}, the locus of points 
from which an ellipse is seen under the right angle. This orthoptic circle separates 
the two cases: when all angles $\beta_i$ are obtuse and when they are all acute. 
\newline
\newline
{\bf Additional Invariants:\/}
The following theorem is a generalization of Theorem \ref{thm:sumcos2}, which is its $k=1$ case. We explain how to deduce this theorem from Theorem \ref{thm:sumcos2}.

\begin{theorem} \label{thm:gensumcos}
For each $k=1,...,n$, the quantity
\[
C_k = \sum_{i=1}^n \cos (\alpha_i + \alpha_{i+1} + \ldots + \alpha_{i+k-1})
\]
remains constant as $P$ varies a 1-parameter family of periodic billiard paths on an ellipse.
\end{theorem}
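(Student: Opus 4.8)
The plan is to reduce $C_k$ to the already-established $k=1$ case (Theorem~\ref{thm:sumcos2}, in the equivalent form of Theorem~\ref{thm:circle-ellipse1}) by passing to the auxiliary Poncelet polygon of unit side-vectors $u_1,\dots,u_n$ and then replacing it by its ``$k$-step'' sub-polygons.

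First I would rewrite $C_k$ in terms of the $u_i$. Recall from the previous section that $u_1,\dots,u_n$ is a Poncelet polygon inscribed in the circle $\omega$ and circumscribed about a centered ellipse $\xi$, and that $\alpha_i=\pi-\angle u_{i-1}Ou_i$. Telescoping $k$ consecutive angles, with oriented central angles (to which $\cos$ is insensitive), gives
\[
\alpha_i+\alpha_{i+1}+\cdots+\alpha_{i+k-1}=k\pi-\angle u_{i-1}Ou_{i+k-1}\pmod{2\pi},
\]
and hence, since the $u_j$ are unit vectors,
\[
\cos\!\big(\alpha_i+\cdots+\alpha_{i+k-1}\big)=(-1)^k\cos\angle u_{i-1}Ou_{i+k-1}=(-1)^k\langle u_{i-1},u_{i+k-1}\rangle.
\]
Summing over $i$, the claim becomes the assertion that $S_k:=\sum_{i=1}^n\langle u_i,u_{i+k}\rangle$ is constant in the family, since $C_k=(-1)^kS_k$. (For $k=n$ this is the trivial identity $S_n=n$.)

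The next step is the geometric heart of the argument: the $k$-diagonals $u_iu_{i+k}$ form Poncelet polygons of their own. Using the group-law description of the Poncelet map (it is a translation on the elliptic curve parametrizing flags, so its $k$-th iterate is again a translation), or equivalently the Poncelet Grid Theorem \cite{Schwartz2007poncelet,levi2007poncelet}, the chords $u_iu_{i+k}$ are all tangent to a single conic $\xi_k$ lying in the pencil generated by $\omega$ and $\xi$; this conic depends only on $\omega$, $\xi$, and $k$, not on the position of $P$ in the family. Since the central reflection $x\mapsto -x$ preserves both $\omega$ and $\xi$, it preserves the whole family and therefore fixes $\xi_k$, so $\xi_k$ is again centered at $O$. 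Consequently, traversing the vertices in steps of $k$ produces $d:=\gcd(k,n)$ closed Poncelet polygons, each with $n/d$ vertices, inscribed in $\omega$ and circumscribed about $\xi_k$; as $P$ moves, these sub-polygons move within the single Poncelet family associated to $(\omega,\xi_k)$.

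Finally I would apply Theorem~\ref{thm:circle-ellipse1} to each of these $d$ sub-polygons; note that its Liouville-theorem proof never used convexity, so it applies verbatim to the (possibly star-shaped) sub-polygons and to the centered conic $\xi_k$. Each sub-polygon's contribution to $\sum\langle u_i,u_{i+k}\rangle$ is then individually constant, and summing the $d$ contributions shows that $S_k$, and hence $C_k$, is constant. The main obstacle is the second step: one must verify that the $k$-diagonals envelope a single centered conic uniformly across the family, together with the bookkeeping when $\gcd(k,n)>1$. Should one prefer to avoid the group-law input, an alternative is to rerun the complex-analytic argument of Theorem~\ref{thm:circle-ellipse1} directly for the shifted sum $\sum_i\langle u_i,u_{i+k}\rangle$, checking as before that the contributions of a vertex escaping to infinity cancel.
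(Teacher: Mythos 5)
Your proposal is correct and is essentially the paper's own argument: both reduce the general $k$ to the already-proved $k=1$ case by observing that the $k$-step structure of a Poncelet family is again a Poncelet family (the Poncelet Grid Theorem, with the same $\gcd(n,k)$ bookkeeping and the same fixed-sign handling of the extra multiples of $\pi$). The only difference is which picture the reduction runs in: you work in the concentric circle--ellipse picture (the $k$-diagonals $u_iu_{i+k}$ envelope a fixed centered conic, then Theorem \ref{thm:circle-ellipse1} applies to the sub-polygons), i.e.\ you prove Theorem \ref{thm:gensumcos2} first and deduce Theorem \ref{thm:gensumcos} from it, whereas the paper applies the Grid Theorem directly to the side lines of the billiard polygon---obtaining new billiard trajectories on a confocal ellipse with angles $\alpha_i+\cdots+\alpha_{i+k-1}-(k-1)\pi$---invokes Theorem \ref{thm:sumcos2}, and only afterwards records your formulation as Theorem \ref{thm:gensumcos2}.
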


\begin{figure}[hbtp]
\centering
\includegraphics[height=3in]{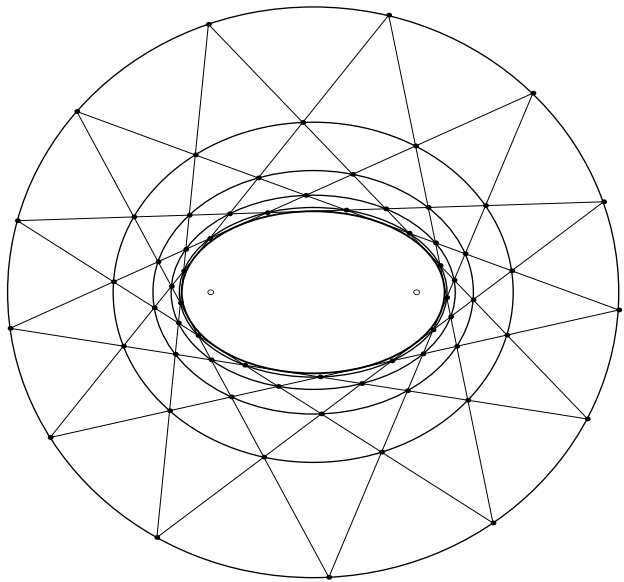} 
\caption{Four $13$-periodic billiard orbits in confocal ellipses tangent to the same caustic.}
\label{pentagons}
\end{figure}

\begin{proof}
To line up our proof with a previously published result that we use, we
state things in terms of Poncelet polygons and the Poncelet porism.
Label the lines containing the sides of a Poncelet $n$-gon cyclically.
Fix $k$ and consider  
the intersections of $i$th and $(i+k)$th lines, where $i=1,2,\ldots,n$. This set of points 
lies on a confocal ellipse and comprises several polygons, each a periodic billiard trajectory 
(the number of polygons equals gcd $(n,k)$).  This statement is a part of the Poncelet 
Grid theorem \cite{Schwartz2007poncelet,levi2007poncelet}. Figure~\ref{pentagons} illustrates the case of $n=13$.
The angles of these new polygons are expressed via the angles of the original one. Namely, the new angles are equal to
\[
\alpha_i + \alpha_{i+1} + \ldots + \alpha_{i+k-1} - (k-1)\pi.
\]
Therefore Theorem \ref{thm:sumcos2}, applied to the new polygons, implies that $C_k$ remains constant in their Poncelet family.
\end{proof}

Theorem \ref{thm:gensumcos} has a reformulation, generalizing Theorem \ref{thm:circle-ellipse1}, which is the $k=1$ case.

\begin{theorem}
	\label{thm:gensumcos2}
	Let $u_1,\ldots, u_n$ be a Poncelet polygon inscribed in a circle $\omega$ with center $O$ and circumscribed about an ellipse $\xi$ with the same center. Then, for each $k$,
\[
\sum_{i=1}^n \cos \angle u_i O u_{i+k}
\]	
are constant in the 1-parameter family of Poncelet $n$-gons.
\end{theorem}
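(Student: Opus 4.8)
The plan is to prove Theorem~\ref{thm:gensumcos2} by exhibiting it as the exact reformulation of Theorem~\ref{thm:gensumcos} promised just above its statement, so that the already-established constancy of $C_k$ does all the work. The first step is the same reduction used in the proof of Theorem~\ref{thm:circle-ellipse1}: since $|Ou_i|=1$ for every $i$, one has $\cos\angle u_iOu_{i+k}=\langle u_i,u_{i+k}\rangle$, and the quantity to be controlled is $\sum_i\langle u_i,u_{i+k}\rangle$.

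Next I would translate the angles $\angle u_iOu_{i+k}$ into the billiard angles $\alpha_i$. Writing $u_i=(\cos\theta_i,\sin\theta_i)$ on $\omega$, the relation $\alpha_i=\pi-\angle u_{i-1}Ou_i$ recorded before Theorem~\ref{thm:circle-ellipse1} says that consecutive directions differ by $\theta_{i+1}-\theta_i=\pi-\alpha_{i+1}$ once a consistent orientation of the polygon is fixed. Summing $k$ consecutive increments gives
\[
\theta_{i+k}-\theta_i=k\pi-(\alpha_{i+1}+\cdots+\alpha_{i+k}),
\]
and hence
\[
\langle u_i,u_{i+k}\rangle=\cos(\theta_{i+k}-\theta_i)=(-1)^k\cos(\alpha_{i+1}+\cdots+\alpha_{i+k}).
\]
Summing over the cycle and reindexing yields $\sum_{i=1}^n\cos\angle u_iOu_{i+k}=(-1)^kC_k$, which is constant in the Poncelet family by Theorem~\ref{thm:gensumcos}.

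The only delicate point is the orientation bookkeeping in the second step: one must check that the increments $\theta_{i+1}-\theta_i=\pi-\alpha_{i+1}$ add up consistently around the closed polygon. This is harmless for the conclusion, since any ambiguity changes $\theta_{i+k}-\theta_i$ only by an integer multiple of $2\pi$, which leaves $\cos(\theta_{i+k}-\theta_i)$ unchanged; the global sign $(-1)^k$ is likewise irrelevant to constancy. I therefore expect no genuine obstacle here, only the need to state the angle identity carefully.

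Alternatively, one can give a self-contained proof along the complex-analytic lines of Theorems~\ref{thm:circle-ellipse1} and \ref{thm:circle-ellipse2}: complexify, use the rational parametrization $p(t)$, and show that $\sum_i\langle u_i,u_{i+k}\rangle$ stays bounded as a vertex $u_j=p(t+I)$ escapes to infinity, then invoke Liouville's theorem. In this route the two offending terms combine into $\langle u_j,\,u_{j-k}+u_{j+k}\rangle$, so boundedness reduces to the antipodality $u_{j-k}=-u_{j+k}$, the analogue of the $k=1$ observation; this is the main thing one would have to establish. It follows because the central symmetry $z\mapsto -z$ preserves both $\omega$ and $\xi$, fixes the vertex at infinity $p(I)$, and swaps its two tangent edges, hence reverses the Poncelet polygon about that vertex and forces $u_{j-m}=-u_{j+m}$ for all $m$. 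Either route completes the proof.
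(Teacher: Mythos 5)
Your main argument is exactly the paper's: Theorem~\ref{thm:gensumcos2} is presented there as a reformulation of Theorem~\ref{thm:gensumcos}, obtained through the same correspondence $\alpha_i=\pi-\angle u_{i-1}Ou_i$ between the billiard angles and the direction-vector polygon that you use, and your identity $\sum_{i=1}^n\cos\angle u_iOu_{i+k}=(-1)^kC_k$ simply makes explicit the bookkeeping the paper leaves implicit. Your alternative Liouville-style sketch (with the antipodality $u_{j-m}=-u_{j+m}$ at an infinite vertex) is a correct self-contained variant in the spirit of the paper's proofs of Theorems~\ref{thm:circle-ellipse1} and~\ref{thm:circle-ellipse2}, but your primary route coincides with the paper's intended proof.
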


\noindent
{\bf Sums of Squared Lengths:\/}
Here is a  corollary of Theorem \ref{thm:gensumcos2}.
Note that \[|u_{i+k}-u_i|^2= 2-2 \cos \angle u_i O u_{i+k}.\] This implies

\begin{corollary}
	\label{cor:sides}
Consider a Poncelet polygon inscribed in a circle and circumscribed about a concentric ellipse. 
Then the sum of the squared  lengths of its $k$-diagonals remains constant in the 1-parameter family of Poncelet polygons.
\end{corollary}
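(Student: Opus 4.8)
The plan is to deduce Corollary \ref{cor:sides} directly from Theorem \ref{thm:gensumcos2} by an elementary algebraic manipulation, with no further geometric input needed. The key observation is already supplied in the statement: since each $u_i$ lies on the unit circle $\omega$ centered at $O$, the squared length of the $k$-diagonal $u_{i+k}-u_i$ expands as
\[
|u_{i+k}-u_i|^2 = |u_{i+k}|^2 - 2\langle u_i,u_{i+k}\rangle + |u_i|^2 = 2 - 2\cos\angle u_i O u_{i+k},
\]
where the last equality uses $|u_i|=|u_{i+k}|=1$ together with $\langle u_i,u_{i+k}\rangle = \cos\angle u_i O u_{i+k}$.

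First I would sum this identity over all $i=1,\ldots,n$ (indices read cyclically modulo $n$), obtaining
\[
\sum_{i=1}^n |u_{i+k}-u_i|^2 = 2n - 2\sum_{i=1}^n \cos\angle u_i O u_{i+k}.
\]
The term $2n$ is manifestly a constant depending only on $n$, not on the particular polygon in the Poncelet family. The remaining sum $\sum_i \cos\angle u_i O u_{i+k}$ is exactly the quantity that Theorem \ref{thm:gensumcos2} asserts to be constant as the polygon varies within its $1$-parameter Poncelet family. Hence the left-hand side, being an affine function of an invariant quantity, is itself invariant, which is precisely the claim.

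The only point requiring a word of care is the correspondence between the combinatorial setup of Corollary \ref{cor:sides} and that of Theorem \ref{thm:gensumcos2}: the "$k$-diagonals" of the inscribed polygon are the chords $u_iu_{i+k}$, and one must confirm that summing over a full cycle $i=1,\ldots,n$ counts each such diagonal with the correct multiplicity (each unordered diagonal is counted once, or twice when $2k\equiv 0 \pmod n$, but in either case the sum is a fixed integer multiple of the invariant and the conclusion is unaffected). Since this is a purely bookkeeping matter and does not interact with the geometry, there is essentially no obstacle: the entire content of the corollary is packaged inside Theorem \ref{thm:gensumcos2}, and the proof is a one-line reduction. I do not anticipate any genuinely hard step.
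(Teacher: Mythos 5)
Your proof is correct and is essentially identical to the paper's: the paper likewise deduces the corollary from Theorem \ref{thm:gensumcos2} via the identity $|u_{i+k}-u_i|^2 = 2 - 2\cos\angle u_i O u_{i+k}$, summed over $i$. Your extra remark on multiplicity bookkeeping is a harmless refinement that does not change the argument.
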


\noindent
{\bf Product of sines of half-angles:\/}
Notice that the angles of polygons formed by the tangents can be 
represented through angles of the billiard trajectory:
\[
\beta_i=\frac{\alpha_i+\alpha_{i+1}}{2}.
\]
Therefore, the statement of Theorem~\ref{thm:prodcos} can be formulated in terms of
the angles of the billiard trajectory and then can 
be extended for the angles of lines in the corresponded
Poncelet grid as in Theorem~\ref{thm:gensumcos}.
Doing this for odd $n$ and the polygon formed by sides of our trajectory
 with step $k=(n-1)/2$, we find that the corresponding angle of 
the tangential polygon $\beta'_i$ equals 
\[
\beta'_i=\frac{\sum_{j\ne i} \alpha_j}{2}=\frac{\pi(n-2)-\alpha_i}{2}.
\]
Since $n$ is odd we get
\[
\cos \beta'_i=\cos\frac{\pi(n-2)-\alpha_i}{2}= \pm \cos \frac{\pi-\alpha_i}{2} =\pm\sin \frac{\alpha_i}{2}.
\]

This gives us the following result, also noticed by D.~Reznik:
\begin{corollary}
	\label{cor:product of halfsines}
	For odd $n$, the quantity
	\[
	\prod_{i=1}^n \sin \frac{\alpha_i}{2} 
	\]
	remains constant as $P$ varies in a 1-parameter
        family of periodic billiard paths on an ellipse.
\end{corollary}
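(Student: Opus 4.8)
The plan is to deduce this from Theorem \ref{thm:prodcos} by combining it with the Poncelet Grid machinery of Theorem \ref{thm:gensumcos}, exactly along the lines suggested by the preceding discussion. The key reformulation is that Theorem \ref{thm:prodcos} is really a statement about the billiard angles themselves: since $\beta_i = \frac{\alpha_i + \alpha_{i+1}}{2}$, the quantity $\prod_i \cos \frac{\alpha_i + \alpha_{i+1}}{2}$ is constant in the Poncelet family. The strategy is to apply this invariance not to the original trajectory $P$, but to an auxiliary billiard polygon produced by the Poncelet Grid theorem.

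First I would fix $k = (n-1)/2$ and form the $k$-step polygon whose vertices are the intersection points of the $i$th and $(i+k)$th side-lines of $P$. Because $n$ is odd, one checks that $\gcd(n,(n-1)/2) = 1$, so this is a single closed $n$-gon; by the Poncelet Grid theorem it is inscribed in a confocal ellipse and is itself a periodic billiard trajectory, and as $P$ deforms in its family this auxiliary polygon sweeps out a Poncelet family of its own. I would then apply Theorem \ref{thm:prodcos} to the $k$-step polygon: the product $\prod_i \cos\beta'_i$ of the cosines of the angles of \emph{its} tangential polygon is constant in that family, hence constant as $P$ varies.

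The technical heart is to express $\beta'_i$ in terms of the original angles $\alpha_i$. Using the angle bookkeeping of Theorem \ref{thm:gensumcos} (the angles of the $k$-step polygon are $\alpha_i + \cdots + \alpha_{i+k-1} - (k-1)\pi$), the fact that a tangential angle is the half-sum of the two adjacent angles of the underlying polygon, and the identity $\sum_j \alpha_j = (n-2)\pi$ for the interior angles of $P$, one obtains, after a cyclic reindexing,
\[
\beta'_i = \frac{\sum_{j \ne i}\alpha_j}{2} = \frac{(n-2)\pi - \alpha_i}{2}.
\]
Since $n$ is odd, $\frac{(n-2)\pi}{2}$ is an odd multiple of $\pi/2$, so $\cos\beta'_i = (-1)^{(n-3)/2}\sin\frac{\alpha_i}{2}$, where the sign is the same for every $i$ and each factor $\sin\frac{\alpha_i}{2}$ is positive because $0 < \alpha_i < \pi$. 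Therefore $\prod_i \cos\beta'_i = (-1)^{n(n-3)/2}\prod_i \sin\frac{\alpha_i}{2}$, and the constancy of the left-hand side forces $\prod_i \sin\frac{\alpha_i}{2}$ to be constant.

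The main obstacle I anticipate is the angle identity for $\beta'_i$: one must carefully match the cyclic order in which the Poncelet Grid presents the $k$-step polygon to the order of the sides of $P$, and confirm that the tangential angle of the auxiliary polygon genuinely reduces to the claimed half-sum $\frac{1}{2}\sum_{j\ne i}\alpha_j$. Once that identity is pinned down, the trigonometric collapse of $\cos\beta'_i$ to $\pm\sin\frac{\alpha_i}{2}$ and the sign bookkeeping are routine; the parity of $n$ enters precisely to convert a cosine into a sine and to keep the overall sign uniform across the product.
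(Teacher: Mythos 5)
Your proposal is correct and follows essentially the same route as the paper: apply Theorem \ref{thm:prodcos} to the $k$-step Poncelet grid polygon with $k=(n-1)/2$, derive $\beta'_i = \frac{(n-2)\pi-\alpha_i}{2}$, and use the parity of $n$ to convert $\cos\beta'_i$ into $\pm\sin\frac{\alpha_i}{2}$ with a sign uniform in $i$. Your treatment is in fact slightly more careful than the paper's on the $\gcd$ check and the sign bookkeeping.
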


\noindent
{\bf Hyperbolic Interpretation:\/}
One can interpret Theorems \ref{thm:gensumcos} and \ref{thm:circle-ellipse2} in terms of hyperbolic geometry. 
Consider $\omega$ as the absolute of the Klein model of the hyperbolic plane, and $\xi$ as 
an ellipse in it. The  hyperbolic and the Euclidean measures of the angles $u_i O u_k$
coincide. We obtain the following corollary.

\begin{corollary}
	\label{cor:poncelet}
	Let $u_1,\ldots, u_n$ be an ideal $n$-gon in the hyperbolic plane whose sides are tangent to an ellipse with center $O$. Then, for each $k$,
\[
\sum_{i=1}^n \cos \angle u_i O u_{i+k}\ \ {\rm and} \ \ \prod_{i=1}^n \cos  \left(\frac{\angle u_{i-1} O u_{i+1}}{2}\right)
\]	
are constant in the 1-parameter family of ideal Poncelet $n$-gons.
\end{corollary}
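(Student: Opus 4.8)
The plan is to recognize the hyperbolic configuration as a Euclidean Poncelet configuration in disguise and then simply transcribe the theorems already proved. First I would set up the dictionary. In the Klein model the absolute is the circle $\omega$, the ideal points are precisely the points of $\omega$, and hyperbolic geodesics are represented by Euclidean chords of the disk bounded by $\omega$. Hence an ideal $n$-gon is a polygon whose vertices $u_1,\ldots,u_n$ lie on $\omega$ and whose sides, viewed as Euclidean chords, close up into a polygon; the requirement that these sides be tangent to the ellipse $\xi$ is exactly the Euclidean condition that the polygon be circumscribed about $\xi$, since tangency of a straight chord to a conic is an affine (hence model-independent) notion. Thus an ideal $n$-gon tangent to $\xi$ is nothing but a Poncelet $n$-gon inscribed in $\omega$ and circumscribed about the concentric ellipse $\xi$, and the one-parameter family of ideal Poncelet $n$-gons coincides with the Poncelet family appearing in Theorems \ref{thm:gensumcos2} and \ref{thm:circle-ellipse2}.

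The one genuine point to verify is the claim, already asserted in the text preceding the corollary, that the hyperbolic and Euclidean measures of the central angles $\angle u_i O u_{i+k}$ agree. This holds because the center $O$ of the Klein disk is the point where the Klein metric coincides with the Euclidean one: evaluating the standard quadratic form
\[
ds^2 = \frac{dx^2+dy^2}{1-x^2-y^2} + \frac{(x\,dx+y\,dy)^2}{(1-x^2-y^2)^2}
\]
at the origin gives exactly $dx^2+dy^2$, so the two metrics induce the same inner product on rays emanating from $O$, and therefore the same angle between any two such rays. I expect this to be the only step requiring care, and it is a minor one; everything else is bookkeeping.

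With these two observations the corollary reduces directly to the earlier results. For the sum, $\sum_{i=1}^n \cos \angle u_i O u_{i+k}$ is literally the Euclidean quantity shown to be constant in Theorem \ref{thm:gensumcos2}, so it is constant along the ideal Poncelet family. For the product, I would use $\beta_i = \pi - \frac{\angle u_{i-1} O u_{i+1}}{2}$, which gives $\cos\!\left(\frac{\angle u_{i-1} O u_{i+1}}{2}\right) = -\cos\beta_i$ and hence
\[
\prod_{i=1}^n \cos\!\left(\frac{\angle u_{i-1} O u_{i+1}}{2}\right) = (-1)^n \prod_{i=1}^n \cos\beta_i.
\]
The product on the right is constant by Theorem \ref{thm:circle-ellipse2}, and the factor $(-1)^n$ is fixed, so the product on the left is constant as well. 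This completes the reduction, the substance of the corollary being carried entirely by the dictionary of the first paragraph together with the angle identity of the second.
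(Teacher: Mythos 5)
Your proposal is correct and follows essentially the same route as the paper: interpret the ideal polygon in the Klein model as a Euclidean Poncelet polygon inscribed in $\omega$ and circumscribed about $\xi$, note that hyperbolic and Euclidean angle measures agree at the center $O$, and invoke Theorems \ref{thm:gensumcos2} and \ref{thm:circle-ellipse2}. Your write-up actually supplies details the paper leaves implicit (the metric computation at the origin and the sign bookkeeping via $\cos\bigl(\tfrac{\angle u_{i-1}Ou_{i+1}}{2}\bigr)=-\cos\beta_i$), but the underlying argument is the same.
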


\noindent
{\bf A Dual Version:\/}
Under the  duality transform with respect to the circle $\omega$, the inner ellipse $\xi$ goes to a 
concentric ellipse $\xi^*$, and we again obtain a Poncelet polygon, this time inscribed in $\xi^*$ and 
circumscribed about $\omega$. The angles between the unit vectors $u_i$ become the angles between 
the sides of the Poncelet polygon, and we obtain the following corollary of Theorems \ref{thm:circle-ellipse1}
and \ref{thm:circle-ellipse2}.

\begin{figure}[hbtp]
\centering
\includegraphics{cosines-fig-8.mps}
\caption{To Corollary~\ref{cor:ellipse-circle}. }
\label{exter}
\end{figure}

\begin{corollary}[Ellipse-Circle version]
	\label{cor:ellipse-circle}
	Let $v_1,\ldots, v_n$ be a Poncelet polygon circumscribed about a circle and inscribed in a concentric ellipse. Denote its angles by $\alpha_i$. Then
\[
\sum_{i=1}^n \cos \alpha_i\ \ {\rm and} \ \ \prod_{i=1}^n \cos  \angle v_{i}Ov_{i+1}
\]	
are constant in the 1-parameter family of  Poncelet $n$-gons.
\end{corollary}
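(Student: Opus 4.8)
The plan is to realize the ellipse-circle configuration of the corollary as the polar dual, with respect to the circle $\omega$, of the circle-ellipse configuration already governed by Theorems \ref{thm:circle-ellipse1} and \ref{thm:circle-ellipse2}, and then to read off how each of the two quantities transforms. First I would fix $\omega$ to be the unit circle centered at $O$ and let $\mathcal{D}$ denote polar duality with respect to $\omega$, so that a point at distance $d$ from $O$ corresponds to the line perpendicular to its radial direction at distance $1/d$, with incidences reversed. Starting from a Poncelet polygon $u_1,\ldots,u_n$ inscribed in $\omega$ and circumscribed about the concentric ellipse $\xi$, each vertex $u_i\in\omega$ dualizes to the tangent line to $\omega$ at $u_i$, while each side $s_i=u_iu_{i+1}$ (tangent to $\xi$) dualizes to a point $v_i$ on $\xi^*$. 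Since poles of tangents to $\xi$ trace the concentric ellipse $\xi^*$, the dual polygon $v_1,\ldots,v_n$ is inscribed in $\xi^*$ and circumscribed about $\omega$; moreover the side $v_{i-1}v_i$, being the polar of $s_{i-1}\cap s_i=u_i$, is exactly the tangent to $\omega$ at $u_i$. As polar duality carries Poncelet configurations to Poncelet configurations (with the inscribed and circumscribed conics interchanged), the one-parameter family of the corollary is the dual image of the family in the earlier theorems, so it suffices to express the two invariants in terms of quantities already known to be constant.

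For the sum, the interior angle $\alpha_i$ of the dual polygon at $v_i$ is the angle between its two sides $v_{i-1}v_i$ and $v_iv_{i+1}$, that is, between the tangents to $\omega$ at $u_i$ and at $u_{i+1}$. Since these tangents are orthogonal to the radii $Ou_i$ and $Ou_{i+1}$, one gets
\[
\alpha_i=\pi-\angle u_iOu_{i+1},\qquad\text{hence}\qquad \cos\alpha_i=-\cos\angle u_iOu_{i+1}.
\]
Summing over $i$ and invoking Theorem \ref{thm:circle-ellipse1} shows that $\sum_i\cos\alpha_i=-\sum_i\cos\angle u_iOu_{i+1}$ is constant. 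For the product, I would use that $v_i$ is the pole of the line $s_i$, so $Ov_i\perp s_i$ and $Ov_{i+1}\perp s_{i+1}$; therefore $\angle v_iOv_{i+1}$ equals, up to a supplement, the angle between the lines $s_i$ and $s_{i+1}$, which meet at $u_{i+1}$ in the interior angle $\angle u_iu_{i+1}u_{i+2}$ of the original polygon. Thus $\cos\angle v_iOv_{i+1}=\pm\cos\angle u_iu_{i+1}u_{i+2}$, and after reindexing the product becomes $\pm\prod_i\cos\angle u_{i-1}u_iu_{i+1}$, which is constant by Theorem \ref{thm:circle-ellipse2}.

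The main thing to get right is the bookkeeping of supplements and signs in these angle correspondences: each equality between a central angle of the dual polygon and an interior angle of the original one holds only up to replacing an angle by its supplement, so I must verify that these ambiguities contribute at most an overall sign. For the product this is harmless, since the sign of $\prod_i\cos\angle v_iOv_{i+1}$ is locked by continuity across the family exactly as in the proof of Theorem \ref{thm:circle-ellipse2}; for the sum the supplement is absorbed once and for all into the identity $\alpha_i=\pi-\angle u_iOu_{i+1}$. A secondary point to confirm is that $\mathcal{D}$ genuinely sends the Poncelet family on the circle-ellipse side onto the full one-parameter family of the corollary, so that constancy transfers across the whole family rather than holding only pointwise; this follows from polarity being an involution intertwining the two Poncelet porisms.
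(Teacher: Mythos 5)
Your proposal is correct and takes essentially the same approach as the paper: the paper also proves the corollary by applying polar duality with respect to $\omega$, which sends the circle--ellipse Poncelet family of Theorems \ref{thm:circle-ellipse1} and \ref{thm:circle-ellipse2} to the ellipse--circle family, with the central angles and the polygon angles exchanging roles (up to supplements). The paper states this in two sentences; your write-up simply supplies the angle and sign bookkeeping that the paper leaves implicit.
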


\section{Proof of the Third Result}

Theorem \ref{thm:areas} is closely related to the
experimental observation of D. Reznik illustrated in Figure \ref{grid}. 
Consider an $n$-periodic billiard trajectory in an ellipse with odd $n$ (the pentagon $Q$ in Figure~\ref{grid}). 
The tangent lines to the ellipse at these  $n$ points form a new $n$-gon (the pentagon $P$ in Figure~\ref{grid}). 
The observation is that {the ratio of the areas of these two polygons remains 
constant as the $n$-periodic billiard trajectory varies in its 1-parameter family.}

\begin{figure}[hbtp]
\centering
\includegraphics{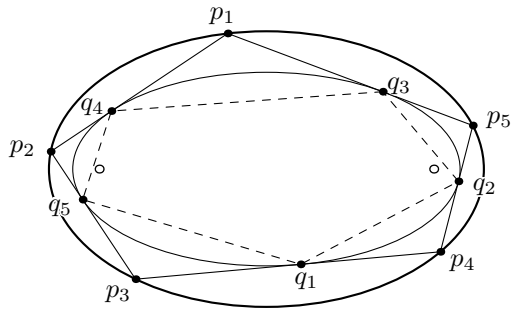}
\caption{The ratio of the areas of the polygons $P$ and $Q$ remains constant in the 1-parameter family of Poncelet polygons.}
\label{grid}
\end{figure}

There exists an affine transformation that makes the two ellipses confocal: first turn one of 
the ellipses into a circle by an affine transformation, and then stretch along the axes of the other ellipse to make the two confocal.
Since an affine transformation does not affect the ratio of the areas, it remains 
to prove the claim for a Poncelet polygon on confocal ellipses, see Figure \ref{grid}.
This is the case observed by Reznik.

Now we deal with the confocal case.
The Poncelet grid theorem \cite{levi2007poncelet} implies that the affine transformation 
that takes the inner ellipse to the outer one by scaling its main axes and reflecting 
in the origin takes the inner polygon to the outer one as well (it maps each vertex 
to the ``opposite one", see the labelling in Figure \ref{grid}). Since the ratio of the areas is invariant under an affine transformation, the result follows.


\bigskip
{\bf Acknowledgements}. This paper would not be written if not for Dan Reznik's curiosity and 
persistence; we are very grateful to him. We also thank R. Garcia and J. Koiller for interesting discussions.
It is a pleasure to thank  the Mathematical Institute 
of the University of Heidelberg for its stimulating atmosphere. ST thanks M.~Bialy 
for interesting discussions and the Tel Aviv University for its invariable hospitality.

AA was supported by European Research Council (ERC) under the European Union's 
Horizon 2020 research and innovation programme (grant agreement No 78818 Alpha). 
RS is supported by NSF Grant DMS-1807320.
ST was supported by NSF grant DMS-1510055 and SFB/TRR 191.

\bibliographystyle{abbrvurl}
\bibliography{cosinelib.bib}
\vskip 2cm

\end{document}